\documentclass{amsart}

\usepackage{latexsym,amssymb,amsmath, amsfonts}
\usepackage{amsthm,amsrefs}
\usepackage{amscd}

\newcommand{\R}{\mathbb{R}}
\newcommand{\C}{\mathbb{C}}
\newcommand{\Z}{\mathbb{Z}}

\renewcommand{\le}{\leqslant}
\renewcommand{\ge}{\geqslant}
\renewcommand{\a}{\alpha}

\newcommand{\be}{\begin{equation}}
\newcommand{\en}{\end{equation}}
\newcommand{\ee}{\end{equation}}

\newcommand{\bt}{\begin{theorem}}
\newcommand{\et}{\end{theorem}}
\newcommand{\bp}{\begin{proof}}
\newcommand{\ep}{\end{proof}}
\newcommand{\bc}{\begin{cor}}
\newcommand{\ec}{\end{cor}}
\newcommand{\bl}{\begin{lemma}}
\newcommand{\el}{\end{lemma}}
\newcommand{\bprop}{\begin{prop}}
\newcommand{\eprop}{\end{prop}}

\renewcommand{\Re}{\mathrm{Re}}

\newcommand{\N}{\mathbb{N}}

\newcommand{\ls}{\lesssim}
\newcommand{\gs}{\gtrsim}

\newtheorem{theorem}{Theorem}[section]

\newtheorem{lemma}{Lemma}[section]

\newtheorem{prop}{Proposition}[section]

\newtheorem{cor}{Corollary}[section]
\newtheorem*{theoremA}{Theorem A}
\newtheorem*{theoremB}{Theorem B}
\newtheorem*{theoremC}{Theorem C}

\numberwithin{theorem}{section} \numberwithin{definition}{section}

\author{Neal Bez}
\address{Graduate School of Mathematical Sciences, The University of Tokyo, 3-8-1 Komaba, Meguro-ku, 153-8914 Tokyo, Japan}\email{bez@ms.u-tokyo.ac.jp}

\author{Keith M. Rogers}
\address{Instituto de Ciencias Matemáticas CSIC-UAM-UC3M-UCM, 28049 Madrid, Spain}\email{keith.rogers@icmat.es}

\author{Shunya Toyoshima}
\address{Department of Mathematics, Graduate School of Science and Engineering, Saitama University, Saitama 338-8570, Japan}\email{s.toyoshima.845@ms.saitama-u.ac.jp}

\thanks{Supported by JSPS Kakenhi grants 22H00098, 23K25777 and 25K24627 (Bez), and MICINNU grants CEX2023-001347-S, PID2021-124195NB-C33 and PID2024-158664NB-C22 (Rogers)}

\date{}
\title[{Orthonormal Sobolev estimates with fractal measures}]{Orthonormal Sobolev estimates\\ with fractal measures}
\keywords{Sobolev estimates, trace estimates, shell potential, mutual energy}
\subjclass[2020]{42B20, 46E35, 47B10, 81Q10}

\begin{document}

\begin{abstract} We prove a fractal version of Lieb's Hardy--Littlewood--Sobolev inequality for orthonormal functions. On the one hand, this can be viewed as a trace theorem for orthonormal functions.
 On the other, it allows us to recover the Rozenblum--Tashchiyan bound for the number of negative eigenvalues  of $-\Delta-\mu$, where $\mu$ is a shell potential. We also  recover Rozenblum's bound  for the sum of negative eigenvalues via a  Lieb--Thirring kinetic inequality. Our proof is direct, avoiding both Schatten classes and variational arguments.
We first reprove Adams' fractal Hardy--Littlewood--Sobolev inequality (for single functions) via Fourier analysis. This yields the required endpoint estimate as well as a bound for the interaction energy of Frostman measures.
\end{abstract}

\maketitle

\section{Introduction}\label{intro}

For $s<d/2$, where $d\in\N$ is the ambient dimension, we consider fractional derivatives formally defined via the Fourier transform by $$D^{-s}f:=\big(|\cdot|^{-s}\widehat{f}\,\big)^\vee.$$
In particular, we consider their integrability with respect to Frostman measures. Throughout $\mu$ will denote a nonnegative Borel measure that satisfies $$
[\mu]_\a:=\sup_{x\in\mathbb{R}^d,\,r>0}\frac{\mu\big(B(x,r)\big)}{r^{\alpha}}<\infty,\qquad
0<\alpha\leq d,
$$
where $B(x,r)$ denotes the Euclidean ball, centred at $x$, of radius $r>0$. 

In the fourth section, we will reprove the fractal Hardy--Littlewood--Sobolev inequality,  due to Adams \cite{A1}.

\begin{theoremA}\label{A} Let $2< q<\infty$ and $s=d/2-\alpha/q$. Then there is a constant $C>0$ such that\begin{align*}
\|D^{-s}f\|_{L^q(d\mu)}
\le C[\mu]_\a^{1/q}\,\|f\|_{L^2(\R^d)}
\end{align*}
whenever $f\in L^2(\R^d)$.
\end{theoremA}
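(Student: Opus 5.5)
The plan is to run a $TT^*$ argument and reduce to a bilinear estimate for a Riesz-type kernel against $\mu$. Set $T f := D^{-s}f|_{\mathrm{supp}\,\mu}$, viewed as a map $L^2(\R^d)\to L^q(d\mu)$. Its adjoint is $T^*g=D^{-s}(g\,d\mu)$, so
$$TT^*g = D^{-2s}(g\,d\mu) = c_{d,s}\,|\cdot|^{-(d-2s)}*(g\,d\mu).$$
Since $d-2s=2\alpha/q=:\gamma$, Theorem A is equivalent to
$$\Big|\iint g(x)h(y)\,|x-y|^{-\gamma}\,d\mu(x)\,d\mu(y)\Big|\le C[\mu]_\a^{2/q}\|g\|_{L^{q'}(d\mu)}\|h\|_{L^{q'}(d\mu)}.$$
Note that $0<\gamma<\alpha$ precisely because $q>2$. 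A scaling/dilation check confirms that the power $[\mu]_\a^{2/q}$ is the right one, which yields $[\mu]_\a^{1/q}$ for $T$.

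Next I would decompose dyadically on the Fourier side, in keeping with the paper's approach. Write $|\xi|^{-2s}=\sum_{j\in\Z}2^{-2js}\psi(2^{-j}\xi)$ with $\psi$ a smooth bump supported in an annulus. The pieces $T_j$ then have Schwartz kernels
$$K_j(x)=2^{j\gamma}\check\psi_s(2^jx),\qquad |K_j(x)|\ls 2^{j\gamma}(1+2^j|x|)^{-N}.$$
For each $j$ I would prove two bounds:
\begin{itemize}
\item the trivial bound $\|T_j\|_{L^1(\mu)\to L^\infty(\mu)}\ls 2^{j\gamma}$;
\item an $L^2(\mu)\to L^2(\mu)$ bound via Schur's test. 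This uses $\int(1+2^j|x-y|)^{-N}\,d\mu(y)\ls[\mu]_\a 2^{-j\alpha}$, obtained by summing the Frostman condition over the balls $B(x,2^{k-j})$, and gives $\|T_j\|_{L^2(\mu)\to L^2(\mu)}\ls[\mu]_\a2^{j(\gamma-\alpha)}$.
\end{itemize}
Riesz--Thorin with $\theta=2/q$ then gives
$$\|T_j\|_{L^{p'}(\mu)\to L^{p}(\mu)}\ls[\mu]_\a^{2/p}\,2^{j(\gamma-2\alpha/p)}$$
for $2\le p\le\infty$. At $p=q$ the exponent vanishes, so the pieces are uniformly bounded but do not sum. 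This is the critical summation, and it is the main obstacle.

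To overcome it I would use Bourgain's interpolation trick. Fix $q_0<q<q_1$ in $(2,\infty)$. The bounds at $q_0$ and $q_1$ decay geometrically on one side of the critical scale and grow on the other. Bourgain's lemma (equivalently, a level-set argument splitting $\sum_j$ at the scale where the two geometric series balance) gives a restricted weak-type $L^{q',1}(\mu)\to L^{q,\infty}(\mu)$ bound with constant $[\mu]_\a^{2/q}$. It works for every $q$ in the open range $(2,\infty)$. Real interpolation between two such exponents (Marcinkiewicz, using the open range) then upgrades this to strong type $L^{q'}(\mu)\to L^q(\mu)$ with constant $\ls[\mu]_\a^{2/q}$; the homogeneity in $[\mu]_\a$ is preserved because both endpoint constants scale the same way after normalizing $\mu$.

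Finally $\|T\|^2=\|TT^*\|$ gives Theorem A. First establish it for Schwartz $f$ and pass to general $f\in L^2$ by density, which is possible since the constant is independent of $f$.
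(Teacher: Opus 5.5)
Your argument is correct, but it reaches Theorem A by a different reduction from the paper's.

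\textbf{How the paper argues.} The paper never forms $TT^*$. It obtains Theorem A as the case $p=2$, $\nu=$ Lebesgue measure ($\beta=d$) of Theorem~\ref{forlater}. That is, it treats $L^2(\R^d)$ as $L^p(d\nu)$ and perturbs the \emph{input} exponent $p$ around $2$. Lemma~\ref{lem2.1} supplies two bounds: $L^p(d\nu)\to L^\infty$ for $D^{-s}_{\le R}$, and $L^p(d\nu)\to L^p(d\mu)$ for $D^{-s}_{>R}$. A level-set split, with $R$ chosen in terms of $t$ and $\|f\|_{L^p(d\nu)}$, then gives full weak type $(p,q)$ for general $f$. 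Marcinkiewicz interpolation in $p$ finally gives $L^2\to L^{q,2}(d\mu)$.

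\textbf{How your argument relates.} Your $TT^*$ step turns the problem into a single-measure estimate for the positive kernel $|x-y|^{-\gamma}$ on $\mu\times\mu$. This is exactly the $\nu=\mu$, $p=q'$ case of the paper's Theorem~\ref{2.1}. From there your proof runs on the same mechanism in different packaging:
\begin{itemize}
\item Riesz--Thorin between the size bound $L^1(\mu)\to L^\infty(\mu)$ and the Schur bound $L^2(\mu)\to L^2(\mu)$ plays the role of Lemma~\ref{lem2.1}.
\item Bourgain's summation lemma at the two off-diagonal points $(q_0',q_0)$ and $(q_1',q_1)$ replaces the paper's low/high frequency split. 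Because the input spaces differ at the two points, you only get restricted weak type. This is the same pattern as the paper's proof of Theorem~\ref{2.2}.
\end{itemize}

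\textbf{What each route buys.} In your route the per-piece estimates are trivial, because the total kernel is positive; the Fourier-side decomposition could even be replaced by a physical-space one. All interpolation also takes place in $L^p(\mu)$ spaces. The cost is that you depend on the Hilbert-space structure of the input. The paper's route gives the Lorentz refinement $L^2\to L^{q,2}(d\mu)$ directly, and that is what Lemma~\ref{lem3.5} uses. Its two-measure estimate is also reused elsewhere: with $\alpha\neq\beta$ for Theorem~\ref{2.2} and Corollary~\ref{cor1.1}, and with $\mu,\nu$ interchanged for the analyticity and uniform bounds in the proof of Theorem~\ref{lem3.1}. Your approach can recover the $L^{q,2}$ refinement as well. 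Your Marcinkiewicz step in fact gives $TT^*:L^{q',2}(\mu)\to L^{q,2}(\mu)$, and Lorentz duality transfers this to $T:L^2\to L^{q,2}(\mu)$.
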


In the fifth section, we extend this to orthonormal functions. The $p=2$ case follows directly from Theorem A, by the triangle inequality. The original orthonormal version, mapping from $\ell^{q/2,1}(\C)$ with $\mu$ Lebesgue measure, is due to Lieb~\cite{L}. 

\begin{theorem}\label{1.1} Let $2\le p< q<\infty$ and $s=d/2-\alpha/q$. Then there is a constant $C>0$ such that
\begin{align*}
\bigg\|\sum_{j} \gamma_j|D^{-s}f_j|^2\bigg\|_{L^{q/2}(d\mu)}
\le  C[\mu]_\a^{2/q}\,\|\gamma\|_{\ell^{p/2}}
\end{align*}
whenever $(f_j)_j$ is orthonormal in $L^2(\R^d)$ and $\gamma=(\gamma_j)_j\in\ell^{p/2}(\C)$.
\end{theorem}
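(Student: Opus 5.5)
Theorem~\ref{1.1} will follow from interpolating between the endpoint $p=2$, which is already in hand, and an estimate at $p$ arbitrarily close to $q$, using the duality principle for orthonormal systems. Fix $q$. At $p=2$ the bound follows from Theorem A and the triangle inequality in $L^{q/2}(d\mu)$ (note $q/2>1$):
\[
\Big\|\sum_j \gamma_j|D^{-s}f_j|^2\Big\|_{L^{q/2}(d\mu)}\le \sum_j|\gamma_j|\,\|D^{-s}f_j\|_{L^q(d\mu)}^2\le C[\mu]_\alpha^{2/q}\|\gamma\|_{\ell^1}.
\]
The other endpoint should be a weak-type or restricted estimate near $p=q$. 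Since $p<q$ is strict, it is enough to prove the bound for $p$ slightly below $q$ for each $q$ and then interpolate in $\gamma$. For fixed orthonormal $(f_j)$ the map $\gamma\mapsto \sum_j\gamma_j|D^{-s}f_j|^2$ is linear, so real or complex interpolation in $\gamma$ applies.

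The near-diagonal estimate will come from duality. The standard Frank--Sabin argument says the inequality with exponents $(p,q)$ is equivalent to
\[
\|W D^{-2s} \overline{W}\|_{\mathfrak S^{p'}}\ls [\mu]_\alpha^{2/q}\|W\|_{L^{2(q/2)'}(d\mu)}^2,
\]
where $W$ acts as multiplication composed with restriction to $\mu$. Here $T=D^{-s}:L^2(\R^d)\to L^2(d\mu)$ and $TT^*$ has kernel $c|x-y|^{2s-d}=c|x-y|^{-2\alpha/q}$ with respect to $\mu\otimes\mu$. The paper says it avoids Schatten classes, so I would phrase the same argument directly as
\[
\sum_j\gamma_j\int V|Tf_j|^2\,d\mu=\sum_j\gamma_j\langle T^*VT f_j,f_j\rangle,
\]
and bound this via H\"older in $j$ together with $\sum_j|\langle Af_j,f_j\rangle|^{r}\le \|A\|_{\mathfrak S^r}^r$, which holds for orthonormal systems. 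That reduces matters to the operator $|V|^{1/2}TT^*|V|^{1/2}$ on $L^2(d\mu)$, with kernel $|V(x)|^{1/2}|x-y|^{-2\alpha/q}|V(y)|^{1/2}$. For the Hilbert--Schmidt case, $p'=2$, i.e. $p=2q/(q+2)$, I would compute
\[
\iint |V(x)||x-y|^{-4\alpha/q}|V(y)|\,d\mu\,d\mu
\]
and control it by the fractal HLS / mutual energy bound for Frostman measures. That bound is the bilinear form of Theorem A that the paper says it derives in the fourth section, and it requires $4\alpha/q<\alpha$, i.e. $q>4$. Higher even Schatten norms $\mathfrak S^{2m}$ would give $p$ closer to $q$ by iterating the kernel: the trace of $(|V|^{1/2}K|V|^{1/2})^{2m}$ is a cyclic multilinear integral, to be estimated with repeated fractal HLS / Adams bounds in the style of Lieb's argument.

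The main obstacle is getting arbitrarily close to $p=q$ with constants compatible with interpolation. Lieb's original route uses Cwikel-type or Birman--Solomyak arguments, which the paper explicitly avoids. My first attempt would be complex interpolation in $s$: embed the problem in the analytic family $D^{-z}$. At $\Re z$ close to $d/2$ the kernel $|x-y|^{2\Re z-d}$ is bounded, and a direct trace estimate is available there. At $\Re z$ small ($q$ large, $p=2$) the $\ell^1$ bound holds. Stein interpolation along the line then yields intermediate $p<q$, with $[\mu]_\alpha$ tracked by scaling. I would check that the growth in $\Im z$ of the Riesz constants is admissible, which it is since it is polynomial times $e^{c|\Im z|}$.
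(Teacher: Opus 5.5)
There is a genuine gap. Your $p=2$ endpoint is correct; it is exactly the paper's remark after Theorem A. However, everything rests on an estimate for $p$ arbitrarily close to $q$, and none of your mechanisms produces one.

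Under the duality principle, the coefficient space $\ell^{p/2}$ is dual to the Schatten class $\mathfrak{S}^{(p/2)'}$. So Hilbert--Schmidt corresponds to $p=4$; your value $p=2q/(q+2)$ lies below $2$. Likewise $\mathfrak{S}^{2m}$ corresponds to $p=4m/(2m-1)$, which decreases to $2$. Higher even Schatten norms therefore move you toward the trivial $\ell^1$ end, not toward $q$. When $q>4$, the exponents you need, $(p/2)'$ close to $(q/2)'=q/(q-2)\in(1,2)$, lie below Hilbert--Schmidt, where no explicit kernel or trace computation controls the norm. When $q\le 4$, even the Hilbert--Schmidt integral diverges, as you note. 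So this route gives at most $2\le p\le 4$, and only for $q>4$.

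The analytic-interpolation fallback does not rescue this. First, $\Re z$ near $d/2$ corresponds to $q\to\infty$, and small $\Re z$ to small $q$, not large $q$. Second, for every $\Re z<d/2$ the kernel $|x-y|^{2\Re z-d}$ is infinite on the diagonal. Equivalently, the Bessel bound $\sum_j|D^{-z}f_j(x)|^2\le(2\pi)^{-d}\||\cdot|^{-\Re z}\|_{L^2}^2$ is vacuous, so there is no line $\Re z=\mathrm{const}$ on which a direct trace estimate holds. Third, even a weak-type bound near $p=q$ would not be enough if you interpolate only in $\gamma$ with $q$ fixed. Both endpoints then have target exponent $q/2$, and interpolation in $\gamma$ alone does not upgrade $L^{q/2,\infty}$ to $L^{q/2}$.

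The missing idea is that the Bessel bound becomes useful only after a frequency cutoff. The two pieces are then combined by real interpolation at fixed $s$, not by complex interpolation across $s$.
\begin{itemize}
\item For orthonormal $f_1,\dots,f_N$, Bessel's inequality gives the pointwise bound $\sum_j|D^{-s}_{\le R}f_j(x)|^2\ls R^{d-2s}$.
\item Lemma~\ref{lem2.1} with $p=q=2$ and $\beta=d$ gives $\|\sum_j|D^{-s}_{>R}f_j|^2\|_{L^1(d\mu)}\ls NR^{d-\alpha-2s}$. The geometric series is summable because $q>2$.
\item Choosing $R^{d-2s}\sim t$ yields $\mu(\{\sum_j|D^{-s}f_j|^2>t\})\ls Nt^{-q/2}$.
\end{itemize}
This is a restricted weak-type estimate exactly at $p=q$ (Theorem~\ref{3.1} with $s_2=0$), and it gives $\ell^{p/2}\to L^{q/2,\infty}$ for all $p<q$.

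To reach the strong $L^{q/2}$ norm, the paper dualizes to $g\mapsto(gD^zf_j)_j$. It then applies Stein's analytic interpolation with $\Re z=-(d/2-\alpha/q)$ moving together with $q$, between two endpoints:
\begin{itemize}
\item the dual form of that weak bound, which controls the $\ell^{\widetilde p}$ sum by $\|g\|_{L^{\widetilde q_0,2}(d\mu)}$;
\item the Lorentz-refined Theorem A, $\|D^{-s}f\|_{L^{q,2}(d\mu)}\ls\|f\|_{L^2}$ (Theorem~\ref{forlater} with $p=2$), which controls the $\sup_j$ by $\|g\|_{L^{\widetilde q_1,\infty}(d\mu)}$.
\end{itemize}
The exponents are chosen with $\widetilde q_1\to\infty$ so that the interpolated Lorentz index equals the Lebesgue index.
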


 For this we prove a more general estimate, in which we treat derivatives on both sides of the inequality.   When~$\mu$ is defined on a lower-dimensional set $\Gamma\subset \R^d$, another consequence is the following trace inequality for orthonormal functions. For single function estimates of this type, with $s_1,s_2\in \Z$,  see \cite[Section 1.4.7]{M}.

\begin{theorem}\label{1.2} Let $0<\theta=\frac{d-2s_1}{\alpha+2s_2}<1$. Then there is a constant $C>0$ such that
$$
\sum_{j=1}^N \int_{\Gamma}  |D^{-s_1}f_j(x)|^2 d\mu(x) \le C[\mu]_\a^{\theta}\mu(\Gamma)^{1-\theta}\bigg(\sum_{j=1}^N \|D^{s_2}f_j\|^2_{L^2(\R^d)}\bigg)^{\theta}
$$
whenever $(f_j)_{j=1,\ldots,N}$ is orthonormal in $L^2(\R^d)$ and $N\ge1$.
\end{theorem}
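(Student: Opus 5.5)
Since $\mu$ is supported on $\Gamma$, the left-hand side is $\int \rho\,d\mu$ with $\rho:=\sum_{j}|D^{-s_1}f_j|^2$. The plan is to derive the bound from an orthonormal Sobolev estimate with derivatives on both sides (the general estimate mentioned before Theorem \ref{1.1}), applied to the functions $g_j:=D^{s_2}f_j$, and then to optimize a scaling parameter. Writing $D^{-s_1}f_j=D^{-(s_1+s_2)}g_j$, we have $\rho=\sum_j|D^{-s}g_j|^2$ with $s=s_1+s_2$. The $g_j$ are not orthonormal, but the density matrix $\sum_j |g_j\rangle\langle g_j| = D^{s_2}\Big(\sum_j|f_j\rangle\langle f_j|\Big)D^{s_2}$ is a positive operator with trace $T:=\sum_j\|D^{s_2}f_j\|_2^2$. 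Moreover, it is sandwiched by the orthogonal projection $P=\sum_j|f_j\rangle\langle f_j|$.

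\textbf{Frequency splitting.} Fix a threshold $R>0$ and split each $f_j=f_j^{\le}+f_j^{>}$ by a sharp Fourier cutoff at $|\xi|=R$. Then $\rho\le 2\rho^{\le}+2\rho^{>}$, and we bound the two pieces separately.

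For the high-frequency piece, use Hölder in $\mu$:
$$\int\rho^{>}\,d\mu\le \mu(\Gamma)^{1-2/q}\|\rho^{>}\|_{L^{q/2}(d\mu)}.$$
Apply Theorem \ref{1.1} with $q$ chosen so that the exponent fits, namely $d/2-\alpha/q$ matched against $s_1$ with the gain $|\xi|^{-s_2}$ absorbed. Alternatively, estimate directly with the trace-class version, where the trace of $P^{>}D^{2s_2}\ldots$ is controlled by $R^{-2s_2}T$-type quantities.

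A cleaner route is the $p=2$ (trace-class) endpoint. The high-frequency piece satisfies
$$\int\rho^{>}\,d\mu\le C[\mu]_\alpha\,\mathrm{Tr}\big(D^{-2s_1}\mathbf 1_{|\xi|>R}P\big).$$
This uses the single-function trace bound $\int|D^{-s'}h|^2\,d\mu\le C[\mu]_\alpha\|h\|_2^2$, which holds for $s'=(d-\alpha)/2$ (the $q=2$ endpoint of Theorem A, valid for the relevant region), together with the triangle inequality. Writing $D^{-s_1}=D^{-(d-\alpha)/2}D^{(d-\alpha)/2-s_1}$, this is bounded by
$$C[\mu]_\alpha\sum_j\int_{|\xi|>R}|\xi|^{d-\alpha-2s_1}|\widehat f_j|^2\le C[\mu]_\alpha R^{d-\alpha-2s_1-2s_2}\,T,$$
since $d-\alpha-2s_1-2s_2<0$ precisely because $\theta<1$.

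For the low-frequency piece, bound the density pointwise by Bessel's inequality, using the orthonormality of $f_j$:
$$\rho^{\le}(x)=\sum_j|\langle f_j,K_x\rangle|^2\le\|K_x\|_2^2=\int_{|\xi|\le R}|\xi|^{-2s_1}\,d\xi\sim R^{d-2s_1},$$
which is finite since $s_1<d/2$ (as $\theta>0$). Hence $\int\rho^{\le}\,d\mu\lesssim R^{d-2s_1}\mu(\Gamma)$.

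\textbf{Optimization.} Minimizing $R^{d-2s_1}\mu(\Gamma)+[\mu]_\alpha R^{d-2s_1-\alpha-2s_2}T$ in $R$ gives $R^{\alpha+2s_2}=[\mu]_\alpha T/\mu(\Gamma)$. The resulting value is $\mu(\Gamma)^{1-\theta}([\mu]_\alpha T)^{\theta}$ with $\theta=(d-2s_1)/(\alpha+2s_2)$, as claimed.

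\textbf{Main obstacle.} The step I expect to be hardest is the high-frequency bound, specifically the endpoint single-function estimate $\|D^{-(d-\alpha)/2}h\|_{L^2(\mu)}^2\lesssim[\mu]_\alpha\|h\|_2^2$. This estimate is false in general at the endpoint; it is the $q=2$ case excluded from Theorem A. To circumvent this, I would instead apply it dyadically. On the shell $|\xi|\sim 2^k$, a frequency-localized trace bound $\int|P_kh|^2\,d\mu\lesssim[\mu]_\alpha 2^{k(d-\alpha)}\|P_kh\|_2^2$ holds by a standard $TT^*$ argument using $\mu(B(x,r))\le[\mu]_\alpha r^\alpha$. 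Summing over $2^k>R$ then gives a geometric series in $2^{k(d-\alpha-2s_1-2s_2)}$, which converges because the exponent is negative. Cross terms between shells are handled by Cauchy–Schwarz with the same geometric decay, possibly after losing an $\epsilon$ that the strict inequality $\theta<1$ absorbs.
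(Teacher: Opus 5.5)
Your final argument is correct and is essentially the paper's. The ingredients match: the pointwise Bessel bound $\sum_j|D^{-s_1}_{\le R}f_j|^2\lesssim R^{d-2s_1}$ for the low frequencies, and for the high frequencies the dyadic bound of Lemma~\ref{lem2.1} (with $p=q=2$, $\beta=d$, $s=s_1+s_2$). That bound is combined with Minkowski over the shells and the geometric series in $2^{k(d-\alpha-2s_1-2s_2)/2}$, which converges exactly because $\theta<1$. So no $\epsilon$-loss is needed, and you should delete both the ``cleaner route'' through the false $q=2$ endpoint and the appeal to Theorem~\ref{1.1}, which in the paper is itself derived from Theorem~\ref{3.1}. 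The only difference is packaging: the paper chooses $R$ depending on the level $t$ to get the weak-type bound of Theorem~\ref{3.1} and then applies Lorentz--H\"older with $\mathrm{1}_\Gamma$, whereas you choose $R^{\alpha+2s_2}=[\mu]_\alpha T/\mu(\Gamma)$ and integrate directly. The two are equivalent, since taking $\Gamma=\{\rho>t\}\cap B(0,n)$ in your bound recovers Theorem~\ref{3.1}.
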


Taking $s_1=0$ and $s_2=1$, the right-hand side can be rewritten in terms of gradients and there is no longer any mention of the Fourier transform.  Taking~$\mu$ to be surface  measure~$\sigma_{r}$ on the sphere of radius $r$,  the inequality simplifies to
\begin{equation}\label{sphere}
\sum_{j=1}^N \int  |f_j(x)|^2 d\sigma_{r}(x) \le C_d\,r^{\frac{d-1}{d+1}}\bigg(\sum_{j=1}^N \|\nabla f_j\|^2_{L^2(\R^d)}\bigg)^{\frac{d}{d+1}}
\end{equation}
whenever $(f_j)_{j=1,\ldots,N}$ is orthonormal in $L^2(\R^d)$, $N\ge1$ and $r>0$. Covering the sphere with $L^2(\R^d)$-normalised bump functions $f_j$, with small disjoint supports, it is easy to calculate that the exponent of the sum is optimal. Covering with a single bump function, supported on $B(0,2r)$, the exponent of $r$ is also smallest possible.

When $\mu$ is absolutely continuous with respect to Lebesgue measure,  inequalities like these have proven useful in Mathematical Physics, 
 where the $(f_j)_j$ are wavefunctions of electrons and the orthonormality corresponds to the Pauli Exclusion Principle. With $d\mu(x)=V(x)dx$, 
the CLR inequality \cites{C, L, R} provides a bound for the number of negative eigenvalues of $-\Delta+V$ (or number of bound electrons), and the Lieb--Thirring inequality \cite{LT} provides a bound for the sum of negative eigenvalues (preventing the electrons from getting too close to the nuclei). These inequalities can be combined to prove the stability of matter; see for example~\cites{BL0, LS}.
 
 By multiplying \eqref{sphere} by $V(r)$, and integrating with respect to $r$, one obtains an estimate for the potential energy, with radial potential, in terms of its kinetic energy. However the Frostman condition amounts to nonradial~$V$ belonging to a Morrey space with norm defined by
$$
\|V\|_{\alpha}:=\sup_{x\in\mathbb{R}^d,\,r>0}\frac{1}{|B(x,r)|^{\alpha/d}}\int_{B(x,r)}|V(y)|\,dy.
$$
Taking $s_1=s$, $s_2=0$ and $\alpha=d-1$ in Theorem~\ref{1.2} yields the following result.
 
 \begin{cor}\label{cor0.1} Let $0<\theta=\frac{d-2s}{d-1}<1$. Then there is a constant $C>0$ such that
$$
\sum_{j=1}^N \int_{\R^d}|D^{-s}\psi_j(x)|^2V(x)\,dx \le  C\|V\|_{d-1}^{\theta}\|V\|^{1-\theta}_{L^{1}(\R^d)}N^{\theta}
$$
whenever $(\psi_j)_{j=1,\ldots,N}$ is orthonormal in $L^2(\R^d)$, $N\ge 1$ and $V:\R^d\to \R$.
\end{cor}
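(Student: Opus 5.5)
The corollary should follow from Theorem~\ref{1.2} with $s_1=s$, $s_2=0$, $\alpha=d-1$, applied to the measure $d\mu=|V|\,dx$. The orthonormality normalises the right-hand side, and we only need to check that the Frostman constant of $\mu$ is comparable to the Morrey norm of $V$.

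First, reduce to $V\ge0$. The left-hand side with $V$ is at most the left-hand side with $|V|$, and $\|V\|_{d-1}$ and $\|V\|_{L^1}$ are defined through $|V|$. We may also assume $\|V\|_{L^1}<\infty$ and $\|V\|_{d-1}<\infty$, since otherwise there is nothing to prove.

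Set $d\mu(x)=|V(x)|\,dx$, a nonnegative Borel measure, and take $\Gamma=\R^d$. Then $\mu(\Gamma)=\|V\|_{L^1(\R^d)}$. For the Frostman constant, note that $|B(x,r)|=c_d r^d$. Hence
$$\frac{\mu(B(x,r))}{r^{d-1}}=c_d^{(d-1)/d}\,\frac{1}{|B(x,r)|^{(d-1)/d}}\int_{B(x,r)}|V(y)|\,dy,$$
so $[\mu]_{d-1}=c_d^{(d-1)/d}\|V\|_{d-1}$. The hypothesis of Theorem~\ref{1.2} with $\alpha=d-1$, $s_1=s$, $s_2=0$ reads $0<\theta=\frac{d-2s}{d-1}<1$, which is exactly the assumption here.

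Applying Theorem~\ref{1.2} then gives
$$\sum_{j=1}^N\int|D^{-s}\psi_j|^2|V|\,dx\le C[\mu]_{d-1}^\theta\,\mu(\R^d)^{1-\theta}\Big(\sum_{j=1}^N\|\psi_j\|_{L^2}^2\Big)^\theta .$$
By orthonormality, $\|D^{0}\psi_j\|_{L^2}=\|\psi_j\|_{L^2}=1$, so the last factor is $N^\theta$. Absorbing $c_d^{\theta(d-1)/d}$ into $C$ gives the stated bound.

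The only point that needs care is that the theorem is applied with $\Gamma$ the whole space, where $\mu$ is not confined to a lower-dimensional set. This is harmless, since the theorem only uses $[\mu]_\alpha$ and $\mu(\Gamma)$, and $\mu$ is supported in $\Gamma=\R^d$. Beyond this, the argument is a direct substitution.
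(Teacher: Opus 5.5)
Your proof is correct and matches the paper's argument: the paper obtains this corollary from Theorem~\ref{1.2} (equivalently Theorem~\ref{3.2} with $g=1$, as in Theorem~\ref{5.1}) by taking $d\mu=V\,dx$, $s_1=s$, $s_2=0$ and $\alpha=d-1$. As you observe, $[\mu]_{d-1}$ is a dimensional constant times $\|V\|_{d-1}$ and $\mu(\R^d)=\|V\|_{L^1(\R^d)}$; your explicit reduction from signed $V$ to $|V|$ is a small point of care that the paper leaves implicit.
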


When the potential consists of a sum of Coulomb potentials, with singularities of  the form $|\cdot|^{-1}$, the Morrey norm on the right-hand side is finite. However, the CLR and Lieb--Thirring inequities (which provide estimates for potentials in  $L^{p}(\R^d)$ with $p>1$) are more suited to this kind of scenario. On the other hand, our estimate can also deal with shell potentials, like in the spherical example above. Denoting the Euclidean distance to the unit sphere by $d(\cdot,S^{d-1})$, a potential of the form $\mathrm{1}_{B(0,2)}d(\cdot, S^{d-1})^{-\lambda}$, with $\lambda<1$, does not belong to the previously mentioned $L^{p}$-spaces, however it belongs to both $L^1(\R^d)$ and the $(d-1)$--Morrey space. 

From this point of view, the spherical trace inequality \eqref{sphere} can be interpreted as a bound for the potential energy by the kinetic energy, with shell potential~$\sigma_{r}$.
Looking back further, Theorem~\ref{1.2} can be viewed as a bound for the potential~$\mu$. This complements the work of Frank \cite{F2} and Frank and Laptev \cite{FL} who proved stronger estimates for potentials supported on hyperplanes. In the final section, we recover the Rozenblum--Tashchiyan bound~\cite{RT} for the the number of negative eigenvalues of $(-\Delta)^{s}-\mu$, as well as Rozenblum's bound \cite{R1} for the sum of  negative eigenvalues. This final implication can be reversed to obtain Theorem~\ref{1.1} whenever  $s_1=0$ and $s_2\in \N$. In particular, the spherical trace estimate \eqref{sphere} can be deduced from Rozenblum's eigenvalue bound via a variational argument.

For the proof of Theorem~\ref{1.1}, we will consider a kind of duality in order to employ analytic interpolation, however the arguments will be otherwise direct. We avoid  Schatten classes and variational arguments, instead relying on an application of Bessel's inequality; see for example \cites{BHLNS,N0}. The dual operator $(D^{-s})^*$ is similar to~$D^{-s}$, but with the roles of Lebesgue measure and~$\mu$ reversed, so  we will work in a generalised framework with two Frostman measures. 

Throughout, the nonnegative Borel measure $\nu$ will satisfy $$
[\nu]_\beta:=\sup_{x\in\mathbb{R}^d,\,r>0}\frac{\nu\big(B(x,r)\big)}{r^{\beta}}<\infty,\qquad
0<\beta\leq d.
$$
A consequence of our arguments will be the following estimate for the interaction energy. This complements the Cauchy--Schwarz bound by the product of the individual energies; see for example \cite[Lemma~3.1]{By} or \cite[Chapter~1]{La}. The Cauchy--Schwarz bound is only effective when $\alpha=\beta$, whereas the following estimate cannot hold when $\alpha=\beta$, as that would include a bound for the (often infinite) individual $\alpha$-energy of an $\alpha$-Frostman measure.

\begin{cor}\label{cor1.1} Let $0<\alpha\neq\beta\le d$. Then there is a constant $C>0$ such that
\begin{align*}
        \int_{E} \int_{F}\frac{d\nu(y)d\mu(x)}{|x-y|^{\frac{\alpha+\beta}{2}}}\le C\Big(\mu(E)[\mu]_\alpha\Big)^{1/2}\Big(\nu(F)[\nu]_\beta\Big)^{1/2}.
    \end{align*}
   \end{cor}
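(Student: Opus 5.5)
The kernel $|x-y|^{-(\alpha+\beta)/2}$ is, up to a constant, the kernel of $D^{-2s}$ with $2s=d-\frac{\alpha+\beta}{2}$. So I would rewrite the interaction energy in Fourier form and apply a Fourier-analytic bound of Adams type, as in Theorem~A. Without loss of generality take $\mu=\mu|_E$ and $\nu=\nu|_F$. Restriction does not increase the Frostman constants, so it suffices to show
\[
\int\!\!\int \frac{d\nu(y)\,d\mu(x)}{|x-y|^{\frac{\alpha+\beta}{2}}} \ls \big(\|\mu\|[\mu]_\alpha\big)^{1/2}\big(\|\nu\|[\nu]_\beta\big)^{1/2}.
\]
Since $0<\frac{\alpha+\beta}{2}<d$ when $\alpha\neq\beta$, the kernel is locally integrable. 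The left-hand side therefore equals $c\int |\xi|^{-(d-\frac{\alpha+\beta}{2})}\widehat\mu(\xi)\overline{\widehat\nu(\xi)}\,d\xi$.

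The plan is to factor this as $\int |\xi|^{-a}\widehat\mu\cdot |\xi|^{-b}\overline{\widehat\nu}$ with $a=(d-\alpha)/2$ and $b=(d-\beta)/2$. Then $a+b=d-\frac{\alpha+\beta}{2}$, which is the correct total. Cauchy--Schwarz would then give the product of the individual energies, and those diverge, so that route fails. Instead I would use a dyadic decomposition in frequency and play the two measures off against each other.

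The key single-measure estimate is
\[
\int_{|\xi|\sim 2^k}|\widehat\mu(\xi)|^2 d\xi \ls \min\big(2^{k(d-\alpha)}\|\mu\|[\mu]_\alpha,\ 2^{kd}\|\mu\|^2\big).
\]
The first bound comes from pairing $\mu$ with $\mu*\phi_{2^{-k}}$ and using $\mu(B(x,2^{-k}))\le[\mu]_\alpha 2^{-k\alpha}$; the second is trivial. The analogous bound holds for $\nu$ with $\beta$. Applying Cauchy--Schwarz on each annulus, the piece at scale $2^k$ is bounded by
\[
2^{-k(d-\frac{\alpha+\beta}{2})}\Big(\int_{|\xi|\sim 2^k}|\widehat\mu|^2\Big)^{1/2}\Big(\int_{|\xi|\sim 2^k}|\widehat\nu|^2\Big)^{1/2}.
\]
Using only the Frostman bounds, this is $\ls(\|\mu\|[\mu]_\alpha\|\nu\|[\nu]_\beta)^{1/2}$ uniformly in $k$, but it does not decay, so it is not summable in $k$. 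This summation is the main obstacle.

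To gain decay I would not put both measures in $L^2$ on the same annulus. Instead, on each annulus I would use the Frostman bound for one measure and the trivial bound for the other, or better, use the mixed quantity $\int_{|\xi|\sim2^k}|\widehat\mu||\widehat\nu|$ via the physical-space estimate
\[
\int \mu*\phi_{2^{-k}}\,d\nu \ls \min\big([\mu]_\alpha 2^{-k\alpha}\|\nu\|,\ [\nu]_\beta 2^{-k\beta}\|\mu\|\big)\cdot 2^{kd}.
\]
Say $\alpha>\beta$. For small $k$ I would use the bound involving $[\nu]_\beta$, and for large $k$ the one involving $[\mu]_\alpha$. Optimising the crossover $2^{k_0(\alpha-\beta)}\approx [\mu]_\alpha\|\nu\|/([\nu]_\beta\|\mu\|)$ gives two geometric series. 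Each sums to the geometric mean, because $\alpha\neq\beta$ gives strict decay on both sides. Equivalently, in physical space one splits $\int\mu(B(y,r))\,r^{-\frac{\alpha+\beta}{2}-1}dr\,d\nu(y)$ at the radius $r_0$, uses $\mu(B(y,r))\le[\mu]_\alpha r^\alpha$ for $r<r_0$ and Fubini with $\nu(B(x,r))\le[\nu]_\beta r^\beta$ for $r>r_0$, and optimises $r_0$. I expect this elementary layer-cake version to be the cleanest route, and I would present it as the proof. Getting the signs of the exponents right, so that both tails converge exactly when $\alpha>\beta$ (or symmetrically when $\beta>\alpha$), is the step to check carefully.
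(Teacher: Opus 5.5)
Your final layer-cake argument is correct, and it takes a genuinely different route from the paper's.

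\textbf{Your argument.} Let $\lambda=\frac{\alpha+\beta}{2}$ and $M(r)=(\mu|_E\times\nu|_F)(\{|x-y|<r\})$. Tonelli gives $\int_E\int_F|x-y|^{-\lambda}\,d\nu\,d\mu=\lambda\int_0^\infty r^{-\lambda-1}M(r)\,dr$. Fubini and the Frostman conditions give $M(r)\le\min\{[\mu]_\alpha r^\alpha\nu(F),\,[\nu]_\beta r^\beta\mu(E)\}$. For $\alpha>\beta$, the range $r<r_0$ contributes $\frac{2\lambda}{\alpha-\beta}[\mu]_\alpha\nu(F)r_0^{(\alpha-\beta)/2}$ and the range $r>r_0$ contributes $\frac{2\lambda}{\alpha-\beta}[\nu]_\beta\mu(E)r_0^{-(\alpha-\beta)/2}$. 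Balancing the two gives the claim with $C=4\lambda/|\alpha-\beta|$.

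\textbf{The paper's argument.} The paper takes $p=2$ in Theorem~\ref{2.2}, whose proof is Fourier-analytic:
\begin{itemize}
\item Lorentz--H\"older reduces matters to the restricted weak-type bound $\|D^{\lambda-d}[\nu\mathrm{1}_F]\|_{L^{p,\infty}(d\mu)}\lesssim\nu(F)^{1/p}$. Since the kernel is positive, this is equivalent to the set-wise bound you prove.
\item The multiplier is split at frequency $R$, and the two parts are estimated in $L^{p_0}(d\mu)$ and $L^{p_1}(d\mu)$ via Lemma~\ref{lem2.1}, with factors $R^{\pm\varepsilon}$.
\item After Tchebyshev's inequality, $R$ is chosen level by level (Bourgain's trick).
\end{itemize}

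\textbf{Comparison.} Your single cutoff $r_0$ is the physical-space counterpart of $R$. Your argument needs no Fourier transform, Littlewood--Paley pieces or interpolation, and it makes the blow-up as $\alpha\to\beta$ explicit. It also extends verbatim to all $1<p<\infty$: with $\lambda=\alpha/p+\beta/p'$ the exponents become $(\alpha-\beta)/p'$ and $(\beta-\alpha)/p$, so you recover all of Theorem~\ref{2.2}. What the paper's route buys is uniformity. It is an operator estimate for $D^{\lambda-d}$, built from the same frequency-localised Lemma~\ref{lem2.1} and high/low splitting that are reused for the Lorentz-refined Theorem~\ref{forlater} and hence for the orthonormal estimates. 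There one must treat $D^{-s}$ and $D^{z}$ acting on $L^2$ functions, not a positive kernel tested on sets.

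\textbf{The discarded Fourier material.} This part of your proposal is unnecessary and, as written, not quite right. First, the Fourier formula for the mutual energy needs justification when the individual energies are infinite. Second, $\int\mu*\phi_{2^{-k}}\,d\nu$ is, up to a constant, the signed pairing $\int\widehat\phi(2^{-k}\xi)\,\widehat\mu(\xi)\overline{\widehat\nu(\xi)}\,d\xi$, so it does not bound $\int_{|\xi|\sim2^k}|\widehat\mu||\widehat\nu|\,d\xi$. Since your proof rests on the layer-cake computation alone, neither point affects its validity.
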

In the Fourier analytic proof, we will employ a  technique, previously used by Bourgain~\cite{B}, which takes advantage of a separation of high and low frequencies within the Marcinkiewicz real interpolation argument.

\section{Notation}\label{decomp}

From now on we will write $A\ls B$ if there is a constant $C>0$, depending only on $d,\alpha,\beta, p, q,r, s$  such that $A\le CB$. Moreover, by dividing $\mu$ by $[\mu]_\alpha$ and $\nu$ by~$[\nu]_\beta$, there is no loss of generality in supposing that $[\mu]_\alpha=[\nu]_\beta=1$. Therefore, we will only explicitly write the Frostman dependence of the constants when it is convenient and the Frostman normalisation should be assumed otherwise.

Let smooth  $\varphi:\R\to[0,1]$ be supported in $(1/2,2)$. We write $D^{-s}_{R}$ for the fractional derivative combined with the Littlewood--Paley projection;
$$
 D^{-s}_{R}[f\nu]:=\Big(\,|\cdot|^{-s}\varphi(R^{-1}|\cdot|)\widehat{f\nu}\,\Big)^\vee.
$$
The Fourier transform and its inverse are formally defined by
$$
\widehat{f\nu}:=\frac{1}{(2\pi)^{d/2}}\int_{\R^d} e^{-i\langle x, \cdot\rangle}f(x)\,d\nu(x),\quad \quad g^\vee:=\frac{1}{(2\pi)^{d/2}}\int_{\R^d} e^{i\langle \cdot, \xi\rangle}g(\xi)\,d\xi.
$$
Let smooth and nonincreasing $\phi:[0,\infty)\to[0,1]$ be equal to 1 on $[0,1]$ and supported in $[0,2)$. Then $\varphi:=\phi-\phi(2\,\cdot)$ is nonnegative and satisfies $\sum_{k\in\mathbb{Z}}\varphi(2^{-k}\cdot)=1$, so that
 $D^{-s}=D^{-s}_{\le R}+D^{-s}_{> R}$, where
$$
D^{-s}_{\le R}:=\sum_{2^{k}\le R} D^{-s}_{2^k},\qquad D^{-s}_{> R}:=\sum_{2^{k}> R}D^{-s}_{2^k}.
$$
The constants will also mildly depend on this $\varphi$ which will not change throughout.

\section{Proof of Theorem~\ref{1.2}}

We will need the following version of Theorem~A at the $p=q$ endpoint. As usual,  the H\"older conjugate is defined via the relation $1/p+1/p'=1$.
\begin{lemma}\label{lem2.1} Let $1\le p\le q\le \infty$. Then
\begin{equation*}
\big\|D^{-s}_{R} [f\nu]\big\|_{L^q(d\mu)}\ls [\mu]^{1/q}_\alpha[\nu]^{1/p'}_\beta
R^{d-\beta/p'-\alpha/q-s}\|f\|_{L^p(d\nu)}
\end{equation*}
whenever $f\in L^p(d\nu)$.
\end{lemma}

\begin{proof} 
We use the well-known decay of the Fourier transform of compactly supported smooth functions to obtain
$$
\Big|\Big(|\cdot|^{-s}\varphi(|\cdot|)\Big)^\vee\Big|\ls \frac{1}{(1+|\cdot|)^{d+1}}.
$$
The inverse Fourier transform of a product can be written as a convolution, so by rescaling we find
\begin{align*}
\big|D^{-s}_{R} [f\nu](x)\big|&\ls R^{d-s}\int \frac{|f(y)|\,d\nu(y)}{(1+|R(x-y)|)^{d+1}}\\
&\ls R^{d-s}\sum_{k\ge0} \int_{B(x,2^kR^{-1})}2^{-(d+1)k} |f(y)|\,d\nu(y).
\end{align*}
Using the fact that $\nu\big(B(x,2^{k}R^{-1})\big)\le [\nu]_\beta2^{\beta k}R^{-\beta}$, by two applications of H\"older's inequality, this yields
\begin{align}\nonumber
\big|D^{-s}_{R} [f\nu](x)\big|&\ls   R^{d-s-\beta/p'}\sum_{k\ge0}2^{-(d+1)k+\beta k/p'}\|f\|_{L^p(d\nu(B(x,2^{k}R^{-1})))}\\
&\ls   R^{d-s-\beta/p'}\Big(\sum_{k\ge0}2^{-(d+1)k}\|f\|^p_{L^p(d\nu(B(x,2^{k}R^{-1})))}\Big)^{1/p},\label{before}
\end{align}
where in the second application we have summed a geometric series.

Now, considering a partition of $\R^d$ into cubes $Q\in \mathcal{Q}$ of side $2(\sqrt{d}R)^{-1}$, on each cube we have 
\begin{align*}
\big\|D^{-s}_{R} [f\nu]\big\|_{L^q(d\mu(Q))}\le [\mu]^{1/q}_\alpha R^{-\alpha/q}\sup_{x\in Q}|D^{-s}_{R} [f\nu](x)|.
\end{align*}
Here we have used that  $\mu(Q)\le [\mu]_\alpha R^{-\alpha}$.
Combining with \eqref{before}, we find that
\begin{align*}
\big\|D^{-s}_{R} [f\nu]&\big\|_{L^q(d\mu(Q))} \ls R^{d-s-\beta/p'-\alpha/q}\Big(\sum_{k\ge0}2^{-(d+1)k}\|f\|^p_{L^p(d\nu(B(x_Q,2^{k+1}R^{-1})))}\Big)^{1/p},
\end{align*}
where $x_Q$ denotes the centre of $Q$. Given that $\ell^p$ is embedded in $\ell^q$, this yields
\begin{align*}
\big\|D^{-s}_{R} [f\nu]\big\|_{L^q(d\mu)}&= \Big(\sum_{Q\in \mathcal{Q}}\big\|D^{-s}_{R} [f\nu]\big\|^q_{L^q(d\mu(Q))}\Big)^{1/q}\\
&\ls R^{d-s-\beta/p'-\alpha/q}\Big(\sum_{Q\in \mathcal{Q}}\sum_{k\ge0}2^{-(d+1)k}\|f\|^p_{L^p(d\nu(B(x_Q,2^{k+1}R^{-1})))}\Big)^{1/p}.
\end{align*}
Finally, we can change the order of the sums, by Fubini's theorem, and use the finite overlapping of the balls, incurring a factor of $d^{d/2}2^{d(k+1)}$, before summing another geometric series to obtain the desired inequality.
\end{proof}

In the proof of the following theorem we adapt an argument due to Nam \cite[Theorem 2.2]{N0}, in which he simultaneously proved Lieb's Hardy--Littlewood--Sobolev inequality \cites{L} and the Lieb--Thirring kinetic inequality \cite{LT}. The Frostman measure prevents us from using Plancherel's identity and the momentum decomposition, however we get around this by using a real interpolation trick previously employed by Bourgain~\cite{B}.

\begin{theorem}\label{3.1} Let $0<2/q=\frac{d-2s_1}{\alpha+2s_2}<1$. Then there is a constant $C>0$ such that
$$
\Bigg\|\sum_{j=1}^N |D^{-s_1}f_j|^2\Bigg\|_{L^{q/2,\infty}(d\mu)} \le  C[\mu]_\alpha^{2/q}\bigg(\sum_{j=1}^N \|D^{s_2}f_j\|^2_{L^2(\R^d)}\bigg)^{2/q}
$$
whenever $(f_j)_{j=1,\ldots,N}$ is orthonormal in $L^2(\R^d)$ and $N\ge 1$.
\end{theorem}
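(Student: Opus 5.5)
The plan is to prove the weak-type bound in its distributional form. We normalise $[\mu]_\alpha=1$ and write
$$
\rho:=\sum_{j=1}^N|D^{-s_1}f_j|^2,\qquad K:=\sum_{j=1}^N\|D^{s_2}f_j\|_{L^2(\R^d)}^2 .
$$
The claimed inequality is equivalent to $\lambda^{q/2}\mu(\{\rho>\lambda\})\ls K$ for every $\lambda>0$, and we may assume $K<\infty$. Following the Bourgain-type real interpolation trick, for each $\lambda$ we split every function at a frequency $R=R(\lambda)$, to be fixed below, as
$$
D^{-s_1}f_j=D^{-s_1}_{\le R}f_j+D^{-s_1}_{>R}f_j .
$$
The low-frequency part is then controlled pointwise using orthonormality. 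The high-frequency part is controlled in $L^1(d\mu)$ using Lemma~\ref{lem2.1} and the extra smoothness $D^{s_2}$.

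\emph{Low frequencies.} For fixed $x$ we write $D^{-s_1}_{\le R}f_j(x)=\langle f_j,k_x\rangle$. Here $\widehat{k_x}(\xi)=(2\pi)^{-d/2}e^{-i\langle x,\xi\rangle}|\xi|^{-s_1}\sum_{2^k\le R}\varphi(2^{-k}|\xi|)$. Since $0\le\sum_{2^k\le R}\varphi(2^{-k}\cdot)\le 1$ is supported in $|\xi|\le 2R$, and since $s_1<d/2$ (because $\theta>0$), Bessel's inequality gives
$$
\sum_j|D^{-s_1}_{\le R}f_j(x)|^2\le\|k_x\|_{L^2}^2\ls\int_{|\xi|\le 2R}|\xi|^{-2s_1}\,d\xi\ls R^{d-2s_1}.
$$
We choose $R$ so that this bound is at most $\lambda/4$, that is, $R^{d-2s_1}\sim\lambda$. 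Since $|a+b|^2\le 2|a|^2+2|b|^2$, we get $\{\rho>\lambda\}\subset\{\sum_j|D^{-s_1}_{>R}f_j|^2>\lambda/4\}$.

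\emph{High frequencies.} By Chebyshev, $\mu(\{\rho>\lambda\})\le 4\lambda^{-1}\sum_j\|D^{-s_1}_{>R}f_j\|_{L^2(d\mu)}^2$. For a single $f$ we put $g=D^{s_2}f$ and write
$$
D^{-s_1}_{>R}f=\sum_{2^k>R}D^{-s_1-s_2}_{2^k}[\tilde P_k g],
$$
where $\tilde P_k$ is a fattened Littlewood--Paley projection equal to the identity on the support of $\varphi(2^{-k}|\cdot|)$. We apply Lemma~\ref{lem2.1} with $\nu$ Lebesgue measure, $\beta=d$ and $p=q=2$. This bounds each term in $L^2(d\mu)$ by $2^{-k\delta}\|\tilde P_kg\|_{L^2}$, where
$$
\delta=\tfrac12\big(\alpha+2s_2-(d-2s_1)\big)>0 .
$$
The positivity of $\delta$ is exactly the hypothesis $\theta<1$. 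The triangle inequality, Cauchy--Schwarz in $k$ and the almost-orthogonality $\sum_k\|\tilde P_kg\|_2^2\ls\|g\|_2^2$ then give
$$
\|D^{-s_1}_{>R}f\|_{L^2(d\mu)}^2\ls R^{-2\delta}\|D^{s_2}f\|_{L^2}^2 .
$$
Summing over $j$ yields $\mu(\{\rho>\lambda\})\ls\lambda^{-1}R^{-2\delta}K$.

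\emph{Combining.} With $R^{d-2s_1}\sim\lambda$ we have $\lambda^{-1}R^{-2\delta}\sim R^{-(\alpha+2s_2)}\sim\lambda^{-(\alpha+2s_2)/(d-2s_1)}=\lambda^{-q/2}$. This is precisely the weak-type bound, and restoring $[\mu]_\alpha$ by scaling gives the factor $[\mu]_\alpha^{2/q}$. The step needing the most care is the high-frequency estimate. One must check that Lemma~\ref{lem2.1} applies with the combined multiplier $|\xi|^{-s_1-s_2}\varphi$; this is fine because the decay of its kernel is all that was used, even if $s_2$ has either sign. One must also check that summing the dyadic pieces in $L^2(d\mu)$, where Plancherel is unavailable, loses only the geometric factor. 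The choice of a single $\lambda$-dependent cutoff $R$ is what replaces Nam's momentum decomposition.
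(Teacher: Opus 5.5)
Your proposal is correct and follows the paper's proof essentially step for step. It uses the same splitting at a frequency $R$ with $R^{d-2s_1}\sim\lambda$, Bessel's inequality for the low frequencies, and Tchebyshev plus the $p=q=2$, $\beta=d$, $s=s_1+s_2$ case of Lemma~\ref{lem2.1} for the high frequencies. The only difference is that your fattened projections $\tilde P_k$ and the Cauchy--Schwarz/almost-orthogonality step in $k$ are unnecessary: the paper simply bounds each piece $D^{-s_1}_{2^k}f_j=D^{-s_1-s_2}_{2^k}[D^{s_2}f_j]$ by $2^{-k\delta}\|D^{s_2}f_j\|_{L^2}$ and sums the geometric series over $2^k>R$.
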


\begin{proof} 
Recalling the frequency decomposition of the previous section, by multiplying out the square and Young's inequality,
\begin{align}\label{recall}
\frac{1}{2}\sum_{j=1}^N |D^{-s_1}f_j|^2&\le \sum_{j=1}^N |D^{-s_1}_{\le R}[f_j]\big|^2+\sum_{j=1}^N |D^{-s_1}_{> R}[f_j]\big|^2.
\end{align}
In particular, at any given point $x\in\R^d$, regardless of the choice of $R$, one of the two terms on the right-hand side must be somewhat large.

For the lower frequencies, we write
\begin{align*}
\sum_{j=1}^N \big|D^{-s_1}_{\le R}[f_j](x)\big|^2= \frac{1}{(2\pi)^{d}}\sum_{j=1}^N \Big|\Big\langle e^{i\langle x, \cdot\rangle}|\cdot|^{-s_1}\!\!\sum_{2^k\le R}\varphi(|2^{-k}\cdot|), \overline{\widehat{f}_j} \Big\rangle\Big|^2.
\end{align*}
As the $\widehat{f}_j$ are orthonormal and $s_1<d/2$, we can use Bessel's inequality to find
\begin{align*}\label{low}
\sum_{j=1}^N \big|D^{-s_1}_{\le R}[f_j](x)\big|^2&\le  \frac{1}{(2\pi)^{d}}\Big\||\cdot|^{-s_1}\!\!\sum_{2^k\le R}\varphi(|2^{-k}\cdot|)\Big\|^2_{L^2(\R^d)}
\ls  R^{d-2s_1}.
\end{align*}
Taking $R^{d-2s_1}=ct$ with $c$ sufficiently small, we bound the lower frequencies by~$t/4$. Recalling \eqref{recall}, this  forces the inclusion
\begin{equation}\label{inclusion}
\Big\{ \,x\,:\,\sum_{j=1}^N \big|D^{-s_1}f_j(x)\big|^2> t\,\Big\}\subset \Big\{  \,x\,:\,\sum_{j=1}^N \big|D^{-s_1}_{> R}[f_j](x)\big|^2> t/4\,\Big\}.
\end{equation}

For the higher frequencies, the $\ell^1$-triangle inequality followed by the $L^2(d\mu)$-triangle inequality yields
$$
\bigg\|\sum_{j=1}^N\big|D^{-s_1}_{> R}[f_j]\big|^2\bigg\|_{L^{1}(d\mu)}\le \sum_{j=1}^N\Big(\sum_{2^k> R} \big\|D^{-s_1}_{2^k}[f_j]\big\|_{L^2(d\mu)}\Big)^2.
$$
Using the $p=q=2$ version of Lemma \ref{lem2.1}, with $s=s_1+s_2$ and $\beta=d$, we find
\begin{equation*}\label{forlat}
\bigg\|\sum_{j=1}^N\big|D^{-s_1}_{> R}[f_j]\big|^2\bigg\|_{L^{1}(d\mu)}
\ls  R^{d-\alpha-2(s_1+s_2)}\sum_{j=1}^N\big\|D^{s_2}f_j\big\|^2_{L^{2}(\R^d)}.
\end{equation*}
Here, as $2(s_1+s_2)>d-\alpha$,  we are able to sum the resulting geometric sequence.
Then, by an application of Tchebyshev's inequality, this yields
\begin{equation*}
\mu\Big(\Big\{\,x\, :\, \sum_{j=1}^N \big|D^{-s_1}_{> R}[f_j]\big(x)|^2> t/4\,\Big\}\Big)\ls  t^{-1}R^{d-\alpha-2(s_1+s_2)}\sum_{j=1}^N\big\|D^{s_2}f_j\big\|^2_{L^{2}(\R^d)}.
\end{equation*}
Taking $R^{d-2s_1}=ct$ and combining with \eqref{inclusion}, we find
$$
\mu\bigg(\Big\{\,x\in \mathbb{R}^d\, :\, \sum_{j=1}^N \big|D^{-s_1}f_j(x)\big|^2> t\,\Big\}\bigg)\ls  t^{-\frac{\alpha+2s_2}{d-2s_1}}\sum_{j=1}^N\big\|D^{s_2}f_j\big\|^2_{L^{2}(\R^d)},
$$
which is the desired weak-type estimate.
\end{proof}

By an application of the Lorentz--H\"older inequality (see for example \cite[Section~1.4]{G}), we obtain the following more general version of Theorem~\ref{1.2} (where~$g$ was taken to be the indicator function $\mathrm{1}_\Gamma$ of $\Gamma$). Here~$\widetilde{q}$ satisfies $2/q+2/\widetilde{q}=1$. 

\begin{theorem}\label{3.2} Let $0<2/q=\frac{d-2s_1}{\alpha+2s_2}<1$. Then there is a $C>0$ such that
$$
\sum_{j=1}^N \big\|gD^{-s_1}f_j\big\|^2_{L^{2}(d\mu)} \le  C[\mu]_\alpha^{2/q}\|g\|^2_{L^{\widetilde{q},2}(d\mu)}\bigg(\sum_{j=1}^N \|D^{s_2}f_j\|^2_{L^2(\R^d)}\bigg)^{2/q}
$$
whenever $(f_j)_{j=1,\ldots,N}$ is orthonormal in $L^2(\R^d)$ and $N\ge 1$.
\end{theorem}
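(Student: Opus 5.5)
We derive Theorem~\ref{3.2} from the weak-type estimate of Theorem~\ref{3.1} by the Lorentz--H\"older inequality; no further analysis of $D^{-s_1}$ is needed. Write
$$
F:=\sum_{j=1}^N |D^{-s_1}f_j|^2 .
$$
Since the summands are nonnegative, we can exchange the finite sum and the integral:
$$
\sum_{j=1}^N \big\|gD^{-s_1}f_j\big\|^2_{L^{2}(d\mu)}=\int |g|^2 F\,d\mu .
$$
The task is therefore to bound the pairing of $|g|^2$ with $F$.

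Next I apply the Lorentz--H\"older inequality on the measure space $(\R^d,\mu)$ with exponents $q/2$ and $(q/2)'$. The relation $2/q+2/\widetilde{q}=1$ gives $(q/2)'=\widetilde{q}/2$. The secondary indices are $\infty$ for $F$ and $1$ for $|g|^2$, since $1/\infty+1/1=1$. This yields
$$
\int |g|^2F\,d\mu\ls \|F\|_{L^{q/2,\infty}(d\mu)}\,\big\||g|^2\big\|_{L^{\widetilde{q}/2,1}(d\mu)}.
$$
Here $q/2>1$ by hypothesis, so $L^{q/2,\infty}$ and $L^{\widetilde q/2,1}$ are genuine dual Lorentz pairs.

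It remains to identify $\||g|^2\|_{L^{\widetilde{q}/2,1}}$ with $\|g\|^2_{L^{\widetilde q,2}}$. This follows from the scaling identity $\||g|^2\|_{L^{r,s}}=\|g\|^2_{L^{2r,2s}}$, taken with $r=\widetilde q/2$ and $s=1$. To check the identity, note that $(|g|^2)^*=(g^*)^2$, so
$$
\||g|^2\|_{L^{r,s}}^s=\int_0^\infty \big(t^{1/r}g^*(t)^2\big)^s\frac{dt}{t}=\Big(\int_0^\infty \big(t^{1/(2r)}g^*(t)\big)^{2s}\frac{dt}{t}\Big),
$$
whose $1/s$ power is $\|g\|^2_{L^{2r,2s}}$. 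The only points needing care are this rearrangement identity and the normalisation of the Lorentz quasinorms; both are routine and affect only constants.

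Finally, Theorem~\ref{3.1} bounds $\|F\|_{L^{q/2,\infty}(d\mu)}$ by $C[\mu]_\alpha^{2/q}\big(\sum_j\|D^{s_2}f_j\|_2^2\big)^{2/q}$. Substituting this into the Lorentz--H\"older bound gives the claimed inequality.

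For Theorem~\ref{1.2}, take $g=\mathrm{1}_\Gamma$. Then $\|g\|^2_{L^{\widetilde q,2}(d\mu)}\sim\mu(\Gamma)^{2/\widetilde q}=\mu(\Gamma)^{1-\theta}$ with $\theta=2/q$, which recovers the stated form.
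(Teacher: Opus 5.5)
Your proposal is correct and is essentially the paper's argument: the paper also deduces Theorem~3.2 from the weak-type bound of Theorem~3.1 by a single application of the Lorentz--H\"older inequality. Your pairing of $L^{q/2,\infty}$ against $L^{\widetilde{q}/2,1}$, together with the identity $\||g|^2\|_{L^{\widetilde{q}/2,1}(d\mu)}=\|g\|^2_{L^{\widetilde{q},2}(d\mu)}$, just makes explicit the details the paper leaves implicit.
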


\section{Lorentz refinements and interaction energies}

For the proof of Theorem~\ref{1.1}, we will also require the following Lorentz-refined version of Theorem~A. We provide a proof based on Lemma~\ref{lem2.1} from which we are able to deduce the interaction energy estimate; see also~\cites{GG,GM} for the $\mu=\nu$ case.

 \begin{theorem}\label{2.1} Let $1< p< q<\infty$ and $\lambda=\alpha/q+\beta/p'$. Then
  \begin{align*}
        \left|\iint\frac{f(y)g(x)}{|y-x|^{\lambda}}d\nu(y)d\mu(x)\right|\lesssim [\mu]_\alpha^{1/q}[\nu]_\beta^{1/p'}\|f\|_{L^{p,2}(d\nu)}\|g\|_{L^{q'\!,2}(d\mu)}.
    \end{align*}
\end{theorem}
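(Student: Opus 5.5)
The plan is to rewrite the bilinear form through the fractional integral of Lemma~\ref{lem2.1}, decompose it dyadically in frequency, and sum the pieces using Bourgain's interpolation trick. Since $\lambda=\alpha/q+\beta/p'<d$, we have, up to a constant depending on $d$ and $\lambda$, $|\cdot|^{-\lambda}=c\,(|\cdot|^{\lambda-d})^\vee$. Setting $s:=d-\lambda$, the left-hand side is therefore $c\int g\,D^{-s}[f\nu]\,d\mu$. Splitting $D^{-s}=\sum_{k\in\Z}D^{-s}_{2^k}$, the form becomes $\sum_k T_k(f,g)$ with
\[
T_k(f,g):=\int g\,D^{-s}_{2^k}[f\nu]\,d\mu .
\]
To justify this I would first take $f,g$ bounded with bounded support (or truncate the kernel smoothly) and pass to the limit at the end.

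Lemma~\ref{lem2.1} with the pair $(\tilde p,\tilde q)$, $1\le\tilde p\le\tilde q\le\infty$, followed by H\"older's inequality, gives
\[
|T_k(f,g)|\lesssim [\mu]_\alpha^{1/\tilde q}[\nu]_\beta^{1/\tilde p'}\,2^{k(\lambda-\beta/\tilde p'-\alpha/\tilde q)}\,\|f\|_{L^{\tilde p}(d\nu)}\|g\|_{L^{\tilde q'}(d\mu)}.
\]
At $(\tilde p,\tilde q)=(p,q)$ the exponent of $2^k$ is exactly zero, so each piece is uniformly bounded but the sum over $k$ is not directly controlled. The strict inequalities $1<p<q<\infty$ leave room to perturb $(1/\tilde p,1/\tilde q)$ in a small two-dimensional neighbourhood of $(1/p,1/q)$ while keeping $\tilde p\le\tilde q$. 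In that neighbourhood $\lambda-\beta/\tilde p'-\alpha/\tilde q$ takes both signs, since moving off the line $\beta/\tilde p'+\alpha/\tilde q=\lambda$ in either direction changes its sign.

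Next I would test on indicators $f=\mathrm{1}_F$ and $g=\mathrm{1}_E$ (after normalising $[\mu]_\alpha=[\nu]_\beta=1$). For each fixed point $(1/p_0,1/q_0)$ of the neighbourhood, choose one perturbed pair giving a factor $2^{k\varepsilon}$ and another giving $2^{-k\varepsilon}$, both straddling $(1/p_0,1/q_0)$. This yields
\[
|T_k(\mathrm{1}_F,\mathrm{1}_E)|\lesssim \min\big(2^{k\varepsilon}A_1,\;2^{-k\varepsilon}A_2\big),
\]
where $A_i=\nu(F)^{1/p_i}\mu(E)^{1/q_i'}$. Summing the two geometric series, and splitting at the $k$ where the two bounds balance, gives
\[
\sum_k|T_k(\mathrm{1}_F,\mathrm{1}_E)|\lesssim \nu(F)^{1/p_0}\mu(E)^{1/q_0'}.
\]
Here $(1/p_0,1/q_0')$ is the corresponding convex combination, which is Bourgain's trick. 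This gives a restricted (weak) type bilinear estimate at every point of an open neighbourhood of $(1/p,1/q')$. It extends from indicators to all functions of the same sign, hence to general $f,g$ by splitting into real/imaginary and positive/negative parts.

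The final step, and the one I expect to be the main obstacle, is upgrading these restricted estimates on an open set to the strong Lorentz bound $L^{p,2}(d\nu)\times L^{q',2}(d\mu)$. I would invoke the bilinear Marcinkiewicz (real) interpolation theorem in the form due to Janson and Grafakos. It states that restricted type bounds for a bilinear form on a two-dimensional neighbourhood of $(1/p,1/q')$ imply boundedness on $L^{p,r_1}\times L^{q',r_2}$ whenever $1/r_1+1/r_2\ge1$; taking $r_1=r_2=2$ gives the claimed inequality. The open neighbourhood must be genuinely two-dimensional, which is exactly why the strict inequality $p<q$ is needed. If citing that theorem is undesirable, I would instead prove it by hand. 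One decomposes $f=\sum_m f_m$ and $g=\sum_n g_n$ into level sets (dyadic heights), applies the restricted bound with exponents depending on $m-n$ to obtain a kernel decaying like $2^{-\varepsilon|m-n|}$, and concludes by Schur's test and Cauchy--Schwarz in $\ell^2$. This is where the indices $2$ and $2$, and the condition $1/2+1/2=1$, come from.
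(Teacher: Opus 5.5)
Everything up to the balancing step is fine. The Fourier rewriting, the bounds from Lemma~\ref{lem2.1} and the Bourgain summation all work, and together they give restricted-type bounds at points near $(1/p,1/q)$. \textbf{The step that fails is your claim that these points fill a two-dimensional open neighbourhood.}

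The exponent $e=\lambda-\beta/\tilde p'-\alpha/\tilde q$ is affine in $(1/\tilde p,1/\tilde q)$. For $e_1>0>e_2$ one has $\sum_{k\in\Z}\min(2^{ke_1}A_1,2^{ke_2}A_2)\approx A_1^{\theta}A_2^{1-\theta}$, where $\theta e_1+(1-\theta)e_2=0$. So whichever pair you straddle with, the output point lies on the critical line $\beta/p_0'+\alpha/q_0=\lambda$.

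This is not a weakness of the method: off that line the bound is false. For example, take $\mu=\nu$ Lebesgue measure ($\alpha=\beta=d$) and replace $F,E$ by $tF,tE$. The left-hand side is multiplied by $t^{2d-\lambda}$ and the right-hand side by $t^{d/p_0+d/q_0'}$, and these agree only on the line. Hence:
\begin{itemize}
\item The hypothesis of the bilinear interpolation theorem you quote is never available here.
\item Your by-hand Schur argument (height levels, exponents chosen according to $m-n$) relies on the same missing freedom. Along a line the gain is a power of $\min(X,X^{-1})$ with $X=\nu(F_m)^{1/\beta}\mu(E_n)^{-1/\alpha}$, not of $2^{-|m-n|}$.
\item Your explanation of $p<q$ is off. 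It is needed so that the perturbed pairs stay in the range $\tilde p\le\tilde q$ of Lemma~\ref{lem2.1} on both sides of the critical line. That yields critical points $(p_0,q_0)$, $(p_1,q_1)$ with $p_0<p<p_1$ and $q_0<q<q_1$. At $p=q$ with $\alpha\neq\beta$ you are stuck at an endpoint of the segment, which is exactly Theorem~\ref{2.2}.
\end{itemize}

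The one-dimensional information nevertheless suffices, via linear interpolation. Set $Tf:=\int f(y)|\cdot-y|^{-\lambda}\,d\nu(y)$. Positivity of the kernel and the choice $E=\{T\mathrm{1}_F>t\}$ turn your bilinear bound at $(p_i,q_i)$ into restricted weak type, $\|T\mathrm{1}_F\|_{L^{q_i,\infty}(d\mu)}\lesssim\nu(F)^{1/p_i}$. Since $p_0\neq p_1$ and $q_0\neq q_1$, the off-diagonal Marcinkiewicz theorem of Stein--Weiss gives $T:L^{p,2}(d\nu)\to L^{q,2}(d\mu)$. Equivalently, use $(L^{p_0,1},L^{p_1,1})_{\theta,2}=L^{p,2}$ and $(L^{q_0,\infty},L^{q_1,\infty})_{\theta,2}=L^{q,2}$. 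Lorentz--H\"older against $L^{q',2}(d\mu)$ then finishes.

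A hand proof also works if you decompose $f=\sum_k f\,\mathrm{1}_{\{f^*(2^{k+1})<|f|\le f^*(2^k)\}}$, and $g$ likewise, instead of using height levels. The gain along the line is then $2^{-\varepsilon|k/\beta-l/\alpha|}$, which is Schur-bounded on $\ell^2\times\ell^2$.

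With this correction your route is a legitimate variant of the paper's. The paper obtains full weak type $(p,q)$ on $L^p(d\nu)$ directly. It uses Lemma~\ref{lem2.1} with $\tilde q=\infty$ below frequency $R$ and $\tilde q=p$ above it, with $R$ chosen in terms of $t$ and $\|f\|_{L^p(d\nu)}$. It then performs the same Lorentz-refined Marcinkiewicz interpolation along the critical line and dualises (Theorem~\ref{forlater}).
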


By considering $d\mu(x)=|x|^{-(d-\alpha)}dx$ and  $d\nu(y)=|y|^{-(d-\beta)}dy$, many cases of the Stein--Weiss inequality \cite{SW0} can be recovered. Via this connection it is already possible to see that $p\le q$ is necessary when $\alpha<\beta$  (see \cite[Section~2.3]{N}), however $p<q$ is also necessary in our generality, by considering~$\mu$ to be $(d-1)$-dimensional Lebesgue measure (see for example \cite[pp. 208-209]{KS}). 

First we  consider a weaker substitute for the $p=q$ endpoint case. When one of the two measures, say $\nu$, is Lebesgue measure, the following estimate is a consequence of a restricted strong-type $(p,p)$ estimate  due to Korobkov and Kristensen \cite[Theorem 1.3]{KK}.
More recently, Mihula, Pick and Spector generalised their result so that $\nu$ is a Frostman measure (see \cite[Theorem~1.6]{MPS}), however at the expense of~$\mu$ depending on their $\nu$, at all scales, reminiscent of the classical two-weight Muckenhoupt condition; see for example \cite{PSSY}. Here we treat two independent Frostman measures.  Corollary~\ref{cor1.1} of the introduction follows by taking $p=2$.

 \begin{theorem}\label{2.2} Let $1< p<\infty$ and  $\lambda=\alpha/p+\beta/p'$ with $\alpha\neq \beta$.  Then
\begin{align*}
        \int_{E} \int_{F}\frac{d\nu(y)d\mu(x)}{|y-x|^{\lambda}}\ls [\mu]_\alpha^{1/p}[\nu]_\beta^{1/p'}\mu(E)^{1/p'}\nu(F)^{1/p}.
    \end{align*}
\end{theorem}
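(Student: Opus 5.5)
The plan is to prove Theorem~\ref{2.2} directly, by a dyadic decomposition of the kernel in physical space followed by an optimisation over a threshold scale, without using Lemma~\ref{lem2.1} or any Fourier analysis. We normalise $[\mu]_\alpha=[\nu]_\beta=1$ and restore the constants by scaling at the end. Since the kernel is nonnegative, we may bound pointwise
\[
|x-y|^{-\lambda}\le \sum_{k\in\Z} 2^{k\lambda}\,\mathrm{1}_{\{|x-y|<2^{-k}\}},
\]
so that, writing $r=2^{-k}$,
\[
\int_E\int_F\frac{d\nu(y)\,d\mu(x)}{|y-x|^{\lambda}}\le \sum_{r\in 2^{\Z}} r^{-\lambda} I_r,\qquad I_r:=\int_E \nu\big(F\cap B(x,r)\big)\,d\mu(x).
\]

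For each single scale there are two bounds available. Using the Frostman condition for $\nu$ inside the integral gives $I_r\le r^{\beta}\mu(E)$. Using Fubini and the Frostman condition for $\mu$ gives $I_r=\int_F\mu(E\cap B(y,r))\,d\nu(y)\le r^{\alpha}\nu(F)$. Since $\lambda=\alpha/p+\beta/p'$, we have
\[
\beta-\lambda=\frac{\beta-\alpha}{p},\qquad \alpha-\lambda=-\frac{\beta-\alpha}{p'}.
\]
Therefore
\[
r^{-\lambda}I_r\le \min\Big(\mu(E)\,r^{(\beta-\alpha)/p},\ \nu(F)\,r^{-(\beta-\alpha)/p'}\Big).
\]

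Suppose $\alpha<\beta$; the case $\alpha>\beta$ is symmetric, with the roles of the two bounds reversed. Fix a threshold $r_0\in 2^{\Z}$. We use the first bound for $r\le r_0$ and the second for $r>r_0$. Both resulting series are geometric with ratio bounded away from $1$, precisely because $\alpha\neq\beta$; this is the only point where that hypothesis enters, and it is where the argument would break down when $\alpha=\beta$. Summing, we obtain
\[
\sum_r r^{-\lambda}I_r\ls \mu(E)\,r_0^{(\beta-\alpha)/p}+\nu(F)\,r_0^{-(\beta-\alpha)/p'}.
\]

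Now choose $r_0$ as the dyadic number with $r_0^{\beta-\alpha}\approx \nu(F)/\mu(E)$, assuming both quantities are positive and finite (otherwise the estimate is trivial). With this choice both terms are comparable to $\mu(E)^{1/p'}\nu(F)^{1/p}$, which is the claimed bound. Undoing the normalisation by replacing $\mu,\nu$ with $\mu/[\mu]_\alpha$ and $\nu/[\nu]_\beta$ produces the factor $[\mu]_\alpha^{1/p}[\nu]_\beta^{1/p'}$. One checks this by homogeneity: the left side is linear in each measure, while the right side has degree $1/p'+1/p=1$ in the measure sizes.

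I do not expect a genuine obstacle in this argument. The only care needed is the bookkeeping of exponents. The identities $\beta-\lambda=(\beta-\alpha)/p$ and $\lambda-\alpha=(\beta-\alpha)/p'$ are exactly what make the optimised expression come out as $\mu(E)^{1/p'}\nu(F)^{1/p}$ rather than some other pair of powers.
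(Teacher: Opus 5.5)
Your proof is correct, and it takes a genuinely different route from the paper's.

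\textbf{The paper's route.} The paper stays on the Fourier side. By symmetry and Lorentz--H\"older it reduces to the restricted weak-type bound $\|D^{\lambda-d}[\nu\mathrm{1}_F]\|_{L^{p,\infty}(d\mu)}\ls\nu(F)^{1/p}$. It then splits $D^{\lambda-d}$ into Littlewood--Paley pieces and applies Lemma~\ref{lem2.1} at two auxiliary exponents $p_0<p<p_1$, obtaining bounds $R^{\pm\varepsilon}$ with $\varepsilon=(\beta-\alpha)(1/p_0-1/p)$. Finally it chooses $R$ depending on the level $t$ (Bourgain's trick) and applies Tchebyshev.

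\textbf{Your route.} You do the analogous splitting in physical space:
\begin{itemize}
\item the dyadic distance scale $r$ plays the role of $R^{-1}$;
\item the two single-scale bounds $I_r\le[\nu]_\beta r^\beta\mu(E)$ and $I_r\le[\mu]_\alpha r^\alpha\nu(F)$ replace the two applications of Lemma~\ref{lem2.1};
\item the threshold is fixed by the ratio $\nu(F)/\mu(E)$ rather than by $t$.
\end{itemize}

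\textbf{What your route buys.} It is shorter and uses only Fubini and the two Frostman conditions. It needs neither the Fourier transform of $|\cdot|^{-\lambda}$ nor auxiliary exponents. It applies verbatim to any nonnegative kernel $K(x,y)\ls|x-y|^{-\lambda}$, with no convolution structure required. It also makes transparent that $\alpha\neq\beta$ is used only to make the two geometric series converge. Nothing is lost: for a positive kernel, your bound over all sets $E$ is equivalent to the paper's restricted weak-type estimate.

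\textbf{What the paper's route buys.} Uniformity with the rest of the paper: the same Lemma~\ref{lem2.1} and frequency-splitting template drive Theorems~\ref{3.1} and \ref{forlater}, in line with the authors' aim of a Fourier-analytic treatment.

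\textbf{A cosmetic point.} Your pointwise bound $|x-y|^{-\lambda}\le\sum_k2^{k\lambda}\mathrm{1}_{\{|x-y|<2^{-k}\}}$ holds only up to a constant factor (for example $2^{\lambda}$) when $\lambda>1$. This is harmless.
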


\begin{proof} By symmetry we can suppose  that $\alpha<\beta$. Then, by an application of the Lorentz--H\"older inequality, we have
\begin{align*}
        \int_{E} \int_{F}\frac{d\nu(y)d\mu(x)}{|y-x|^{\lambda}}\ls \left\|\int_{F} \frac{d\nu(y)}{|y-\cdot\,|^{\lambda}}\right\|_{L^{p,\infty}(d\mu)}\mu(E)^{1/p'}.
        \end{align*}
Therefore, as the Fourier transform of $|\cdot|^{-\lambda}$ is a constant multiple of $|\cdot|^{\lambda-d}$  (see for example \cite[Theorem 2.4.6]{G}), it will suffice to prove 
 \begin{equation*}\label{name}
 \big\|D^{\lambda-d}[\nu\mathrm{1}_F]\big\|_{L^{p,\infty}(d\mu)}
\ls  \nu(F)^{1/p}.
\end{equation*} 
Letting  $p_0<p<p_1$ satisfy $\frac{1}{2p_0}+\frac{1}{2p_1}=\frac{1}{p}$,  by two different versions of Lemma~\ref{lem2.1}, we obtain
\begin{equation*}
\big\|D^{\lambda-d}_{\le R} [\nu\mathrm{1}_F]\big\|_{L^{p_0}(d\mu)}\ls
R^{\varepsilon}\nu(F)^{1/p_0}
\end{equation*}
 on the one hand, where $\varepsilon=(\beta-\alpha)(1/p_0-1/p)$, and 
\begin{equation*}
\big\|D^{\lambda-d}_{> R} [\nu\mathrm{1}_F]\big\|_{L^{p_1}(d\mu)}\ls
R^{-\varepsilon}\nu(F)^{1/p_1}
\end{equation*}
on the other. These follow using the triangle inequality and summing geometric series as before. By Tchebyshev's inequality, we find that
 $$
\mu\Big(\big\{\, x\in \mathbb{R}^d\,:\, \big|D^{\lambda-d}_{\le R}[\nu\mathrm{1}_F](x)\big|> t/2\,\big\}\Big) \ls  t^{-p_0}R^{p_0\varepsilon}\nu(F)
$$
and
$$
\mu\Big(\big\{\, x\in \mathbb{R}^d\,:\, \big|D^{\lambda-d}_{> R}[\nu\mathrm{1}_F](x)\big|> t/2\,\big\}\Big) \ls  t^{-p_1}R^{-p_1\varepsilon}\nu(F).
$$
Taking $R^\varepsilon=t^{\frac{p_0-p_1}{p_0+p_1}}$, the right-hand sides of these bounds are both $t^{-p}\nu(F)$.
So if
$$
 \big|D^{\lambda-d}_{\le R}[\nu\mathrm{1}_F]\big|+\big|D^{\lambda-d}_{> R}[\nu\mathrm{1}_F]\big|\ge \big|D^{\lambda-d}[\nu\mathrm{1}_F]\big|> t,
$$
then one of the two left-hand terms is forced to be larger than $t/2$, yielding
$$
 \mu\left(\left\{\, x\in \mathbb{R}^d\,:\, \big|D^{\lambda-d}[\nu\mathrm{1}_F](x)\big| > t\,\right\}\right)
\ls  t^{-p}\nu(F)
$$
which completes the proof.
\end{proof}

By similar arguments, we can now complete the proof of Theorem~\ref{2.1}. By rewriting the convolution as a fractional derivative and applying the Lorentz--H\"older inequality as before, it is a consequence of the following estimate.

\begin{theorem}\label{forlater} Let $1< p< q<\infty$ and $\lambda=\alpha/q+\beta/p'$. Then
\begin{equation*}
\left\|D^{\lambda-d}[f\nu]\right\|_{L^{q,2}(d\mu)}\ls [\mu]_\alpha^{1/q}[\nu]_\beta^{1/p'}\|f\|_{L^{p,2}(d\nu)}.
\end{equation*}
\end{theorem}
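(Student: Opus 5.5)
We will prove Theorem~\ref{forlater} by a Bourgain-type real interpolation argument. We interpolate between two families of off-diagonal estimates coming from Lemma~\ref{lem2.1}, one for low frequencies and one for high frequencies, and then pass to Lorentz spaces with second index $2$.

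\textbf{Step 1: two perturbed exponent pairs.} Fix exponents $(p_0,q_0)$ and $(p_1,q_1)$ near $(p,q)$ with $1\le p_i\le q_i\le\infty$. They should satisfy $\frac1p=\frac{1-\theta}{p_0}+\frac{\theta}{p_1}$ and $\frac1q=\frac{1-\theta}{q_0}+\frac{\theta}{q_1}$ for some $\theta\in(0,1)$. Since $p<q$ strictly, we have room to perturb $p$ and $q$ independently and keep $p_i\le q_i$.

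For each $i$, apply Lemma~\ref{lem2.1} with $s=\lambda-d$ and sum over dyadic $R'=2^k$:
\[
\|D^{\lambda-d}_{2^k}[f\nu]\|_{L^{q_i}(d\mu)}\ls 2^{k\,\delta_i}\|f\|_{L^{p_i}(d\nu)},\qquad \delta_i:=2d-\lambda-\beta/p_i'-\alpha/q_i .
\]
At $(p,q)$ itself this exponent is $2d-\lambda-\beta/p'-\alpha/q=2(d-\lambda)$, which is not zero in general. So I should recompute with $s=d-\lambda$. The symbol of $D^{\lambda-d}$ is $|\xi|^{\lambda-d}$, and in the paper's convention $D^{-s}$ has symbol $|\xi|^{-s}$, so indeed $s=d-\lambda$. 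The exponent is then
\[
d-\beta/p_i'-\alpha/q_i-(d-\lambda)=\lambda-\beta/p_i'-\alpha/q_i,
\]
which vanishes exactly at $(p,q)$ and is an affine function of $(1/p_i,1/q_i)$.

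Choose the pairs so that $\delta_0>0$ and $\delta_1<0$. The low-frequency piece $D^{\lambda-d}_{\le R}$ is then bounded $L^{p_0}\to L^{q_0}$ with norm $\ls R^{\delta_0}$, and the high-frequency piece $D^{\lambda-d}_{>R}$ is bounded $L^{p_1}\to L^{q_1}$ with norm $\ls R^{\delta_1}$. Both follow from summing geometric series, exactly as in the proof of Theorem~\ref{2.2}.

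\textbf{Step 2: restricted weak type.} Following Theorem~\ref{2.2}, for each $t>0$ split $D^{\lambda-d}=D^{\lambda-d}_{\le R}+D^{\lambda-d}_{>R}$ and apply Tchebyshev's inequality to each piece. Optimising $R=R(t)$ gives restricted weak-type bounds at many exponent pairs. Concretely, for $f=\mathrm{1}_F$,
\[
\mu\{|D^{\lambda-d}[\mathrm{1}_F\nu]|>t\}\ls \min_R\big(t^{-q_0}R^{q_0\delta_0}\nu(F)^{q_0/p_0}+t^{-q_1}R^{q_1\delta_1}\nu(F)^{q_1/p_1}\big).
\]
Balancing the two terms yields $t^{-q}\nu(F)^{q/p}$. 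This uses the convexity relation between $(1/p_i,1/q_i)$ and $(1/p,1/q)$, together with $\delta$ being affine and vanishing at $(1/p,1/q)$; in fact we can take $(1/p_i,1/q_i)$ on a line through $(1/p,1/q)$.

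Because $(p,q)$ lies strictly inside the admissible region $\{p\le q\}$, the same argument applies at every point in a small neighbourhood of $(1/p,1/q)$ along the line $\lambda=\alpha/q+\beta/p'$. Concretely, fix $\lambda$ and vary $(p,q)$ along that line. This gives restricted weak-type estimates $L^{\tilde p,1}\to L^{\tilde q,\infty}$ at two points $(\tilde p_0,\tilde q_0)$ and $(\tilde p_1,\tilde q_1)$ on either side of $(p,q)$ with $\tilde p_i<\tilde q_i$.

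\textbf{Step 3: upgrade to Lorentz index $2$.} Apply the off-diagonal Marcinkiewicz interpolation theorem in its restricted weak-type form (Stein--Weiss). The operator is linear and the two endpoint pairs satisfy $\tilde p_0\ne\tilde p_1$ and $\tilde q_0\ne\tilde q_1$, so we get boundedness $L^{p,r}(d\nu)\to L^{q,r}(d\mu)$ for all $0<r\le\infty$, in particular $r=2$. Tracking constants, the Frostman normalisations give $[\mu]_\alpha^{1/q}[\nu]_\beta^{1/p'}$ by scaling, as in Section~\ref{decomp}.

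The main obstacle is Step 2. One must check that the exponents can actually be chosen: we need $1\le p_i\le q_i$ and opposite signs of $\delta_i$ while staying on the line where the Lemma applies. We also need the perturbed endpoints in Step 3 to differ in both $p$ and $q$. This is exactly where the strict inequality $p<q$ is used, and I would verify it first by an explicit small perturbation.
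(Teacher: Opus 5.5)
Your argument is correct. Its skeleton matches the paper's: a frequency splitting at a $t$-dependent scale $R$ using Lemma~\ref{lem2.1}, followed by Lorentz-refined Marcinkiewicz interpolation between two points on the line $\lambda=\alpha/q+\beta/p'$. Where you differ is in how you obtain the endpoint estimates.

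\textbf{The paper's route.} The paper keeps the input exponent fixed. It uses Lemma~\ref{lem2.1} as an $L^p(d\nu)\to L^\infty$ bound for $D^{\lambda-d}_{\le R}$, with growth $R^{\alpha/q}$. It uses it again as an $L^p(d\nu)\to L^p(d\mu)$ bound for $D^{\lambda-d}_{>R}$, with decay $R^{-\alpha(1/p-1/q)}$; this is where $p<q$ enters. It then chooses $R$ so that the low-frequency part is pointwise below $t/2$. This is exactly the special case $(p_0,q_0)=(p,\infty)$, $(p_1,q_1)=(p,p)$ of your Step 1. Because $p_0=p_1$, it yields a genuine weak-type bound $L^p(d\nu)\to L^{q,\infty}(d\mu)$ for every $f$.

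\textbf{Your route.} You perturb both exponents, as in the proof of Theorem~\ref{2.2}. The two norms $\|f\|_{L^{p_i}(d\nu)}$ are then only comparable for $f=\mathrm{1}_F$. So you get only restricted weak type, and you need the Stein--Weiss restricted-type version of the interpolation theorem. That theorem applies here because the operator is linear.

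\textbf{Checks on your steps.} Your balancing claim in Step 2, which you state without computation, does hold. Write $(1/p_1-1/p,\,1/q_1-1/q)=-\kappa\,(1/p_0-1/p,\,1/q_0-1/q)$ with $\kappa>0$; since $\delta$ is affine and vanishes at $(1/p,1/q)$, this gives $\delta_1=-\kappa\delta_0$. Choosing $R$ so that the first term equals $t^{-q}\nu(F)^{q/p}$ then makes the second term equal the same quantity. Also, the line has slope $\beta/\alpha\in(0,\infty)$ in the $(1/p,1/q)$-plane, so your two interpolation points automatically differ in both exponents.

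\textbf{What each route buys.} The paper's route is shorter and gives a stronger intermediate estimate: weak type rather than restricted weak type. Yours is more uniform. The same computation works at every $1<p\le q<\infty$; on the diagonal only the direction $p_i=q_i$ is allowed, which requires $\alpha\neq\beta$. So your argument recovers Theorem~\ref{2.2} as well.

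\textbf{One correction.} Delete the false start in Step 1. The first formula for $\delta_i$ corresponds to $s=\lambda-d$. Only the corrected exponent $\lambda-\beta/p_i'-\alpha/q_i$, from $s=d-\lambda$, is right.
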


\begin{proof} 
We use two versions of  Lemma~\ref{lem2.1}, with $\lambda-d=-s$, to obtain
\begin{equation}\label{sup}
\big\|D^{\lambda-d}_{\le  R} [f\nu]\big\|_{L^{\infty}} \ls R^{\alpha/q}\|f\|_{L^p(d\nu)}
\end{equation}
on the one hand, and
\begin{equation*}\label{p}
\big\|D^{\lambda-d}_{>  R} [f\nu]\big\|_{L^p(d\mu)} \ls R^{-\alpha(1/p-1/q)}\|f\|_{L^p(d\nu)}
\end{equation*} 
on the other. Here we used that $p<q$ in order to sum the resulting series.
Then, by Tchebyshev's inequality, this implies that
 \begin{equation}\label{popl}
 \mu \left(\left\{\, x\in \, \mathbb{R}^d :\, \big|D^{\lambda-d}_{> R}[f\nu](x)\big|> t/2\,\right\}\right) \ls  t^{-p}R^{-\alpha(1-p/q)}\|f\|^p_{L^p(d\nu)}.
 \end{equation}
By \eqref{sup}, we can take $R=(ct)^{q/\alpha}\|f\|^{-q/\alpha}_{L^p(d\nu)}$ with $c$ small enough so that the lower frequencies are bounded by $t/2$, yielding
 $$
 \left\{\, x\,:\, \big|D^{\lambda-d}[f\nu](x)\big|> t\,\right\}\subset \left\{\, x\, : \big|D^{\lambda-d}_{>  R}[f\nu](x)\big|>t/2\,\right\}.
 $$
 Then, taking our value for $R$ in \eqref{popl}, we find that
\begin{equation}\label{weak}
 \mu\left(\left\{\, x\in \mathbb{R}^d\,:\, \big|D^{\lambda-d}[f\nu](x)\big|> t\,\right\}\right)
\ls  t^{-q}\|f\|^q_{L^{p}(d\nu)},
\end{equation}
which is a weak-type version of the desired estimate.

In order to upgrade to a strong-type $(p,q)$ estimate, we consider a version of ~\eqref{weak} with a $p_0$ smaller than $p$ (which fixes a $q_0$  smaller than $q$ via the relation $\lambda=\alpha/q_0+\beta/p_0'$) and a different version of~\eqref{weak} with a $p_1$ larger than $p$ (which similarly fixes a $q_1$ larger than~$q$). Then by the Lorentz-refined version of the Marcinkiewicz interpolation theorem (see for example \cite[Section 5.3]{BL} or \cite[Theorem 1.4.19]{G}), we can interpolate between the two estimates to complete the proof.
\end{proof}

\section{Proof of Theorem~\ref{1.1}}

Our orthonormal functions are now fixed in $L^2(\R^d)$ providing less flexibility. In order to upgrade Theorem~\ref{3.1} to a strong-type estimate, we use Stein's analytic interpolation  so that $s$ can vary accordingly with the Lebesgue exponents; see~\cite[Section~5.4]{SW}. 
For $z\in \mathbb{C}$, we consider $D^{z}$  formally defined as before by
$$
D^{z}f:=\big(|\cdot|^{z}\widehat{f}\,\big)^\vee.
$$ 
We also need to pass to a kind of dual estimate in order to consider a linear operator, as in \cite[Section 1.3]{F}, however we stop short of appealing to Schatten classes. 

With $r=q$, our desired estimate takes the form
\begin{align*}
\bigg\|\sum_{j} \gamma_j|D^{z}f_j|^2\bigg\|_{L^{q/2,r/2}(d\mu)}
\ls  \|\gamma\|_{\ell^{p/2}},
\end{align*}
which, by Lorentz duality (see for example \cite[Section 1.4]{G}), would follow from
$$
\int_{\R^d} \sum_{j}|\gamma_j| |D^{z}f_j(x)|^2|g(x)|^2d\mu(x) \ls \|\gamma\|_{\ell^{p/2}}\|g\|^2_{L^{\widetilde{q},\widetilde{r}}(d\mu)}.
$$
Recall the half conjugates satisfy $2/p+2/\widetilde{p}=1$.
Then, by Fubini's theorem and an application of H\"older's inequality, this in turn would follow  from 
$$
\bigg(\sum_{j}\|gD^{z}f_j\|^{\widetilde{p}}_{L^2(d\mu)}\bigg)^{1/\widetilde{p}} \ls  \|g\|_{L^{\widetilde{q},\widetilde{r}}(d\mu)}.
$$
Indeed, the three estimates are equivalent, so  Theorem~\ref{1.1} is a consequence of the following estimate which we are now in a position to prove.
\begin{theorem}\label{lem3.1} Let $2<\widetilde{q}<\widetilde{p}$ and $\Re(z)=-(d/2-\alpha/q)$. Then
\begin{equation}\label{desired}
        \bigg(\sum_{j}\|gD^{z}f_j\|_{L^2(d\mu)}^{\widetilde{p}}\bigg)^{1/\widetilde{p}}\lesssim \|g\|_{L^{\widetilde{q}}(d\mu)}
    \end{equation}
    whenever $(f_j)_j$ is orthonormal in $L^2(\R^d)$.
\end{theorem}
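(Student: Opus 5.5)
We would prove \eqref{desired} by Stein's analytic interpolation applied to the linear map $g\mapsto T_zg:=(gD^{z}f_j)_j$, taking $L^{\widetilde q}(d\mu)$ into $\ell^{\widetilde p}(L^2(d\mu))$, with $(f_j)_j$ fixed. On lines $\Re(z)=\text{const}$ the imaginary part only contributes the multiplier $|\xi|^{i\Im z}$. This is unitary on $L^2(\R^d)$, so it preserves orthonormality of the $f_j$ and will not affect constants, and the admissible growth in $\Im z$ is automatic. Throughout, $\Re(z)$ and the exponents are coupled via $\Re(z)=-(d/2-\alpha/q)$ with $1/q=1/2-1/\widetilde q$.

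\emph{First endpoint, $\widetilde p=\infty$.} For every $2<\widetilde q<\infty$, H\"older's inequality and Theorem~A (applied to the single function $D^{i\Im z}f_j$) give
$$\sup_j\|gD^{z}f_j\|_{L^2(d\mu)}\le \|g\|_{L^{\widetilde q}(d\mu)}\sup_j\|D^{z}f_j\|_{L^{q}(d\mu)}\ls \|g\|_{L^{\widetilde q}(d\mu)}.$$

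\emph{Second endpoint, $\widetilde p=2$, which is the orthonormal endpoint.} Here we work one frequency piece at a time, as in the proof of Theorem~\ref{3.1}. Applying Bessel's inequality pointwise to the orthonormal family $\widehat f_j$, exactly as for the low frequencies there, gives
$$\sum_j\|gD^{z}_{2^k}f_j\|^2_{L^2(d\mu)}\ls 2^{k(d+2\Re z)}\|g\|^2_{L^2(d\mu)}.$$
At the other extreme, the $p=q=2$ case of Lemma~\ref{lem2.1} with $\beta=d$ gives
$$\sup_j\|gD^z_{2^k}f_j\|_{L^2(d\mu)}\ls 2^{k(d/2-\alpha/2+\Re z)}\|g\|_{L^\infty(d\mu)}.$$
Interpolating these two bounds (complex interpolation in $z$ and in the exponents) yields, for each dyadic $2^k$, an estimate from $L^{\widetilde q}$ to $\ell^{\widetilde p}(L^2)$ on the diagonal $\widetilde p=\widetilde q$. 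The gain or loss is $2^{k\epsilon}$, with $\epsilon$ linear in $\Re(z)+d/2-\alpha/q$ and of opposite signs on either side of the critical line.

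The main obstacle is summing over $k$: on the critical line the dyadic bounds are uniform, so the sum diverges logarithmically, just as the endpoint $p=q$ fails in Theorem~\ref{2.1}. To overcome this we use Bourgain's trick, as in the proof of Theorem~\ref{2.2}. We fix two auxiliary diagonal exponents $\widetilde q_0<\widetilde q<\widetilde q_1$, at which the dyadic bounds carry factors $2^{k\epsilon}$ and $2^{-k\epsilon}$ respectively. We split the frequencies at $R$, sum the two geometric series for $D^z_{\le R}$ and $D^z_{>R}$, and optimise $R$ in terms of the level $t$. This gives a restricted weak-type bound from $L^{\widetilde q,1}(d\mu)$ into a Lorentz sequence space over $j$ with $\ell^{\widetilde p,\infty}$-type control.

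It is precisely this off-diagonal gain which produces a $\widetilde p$ strictly larger than $\widetilde q$. We would then vary the parameters to obtain two such restricted bounds surrounding the target $(\widetilde q,\widetilde p)$. Marcinkiewicz interpolation (the Lorentz-refined version used in Theorem~\ref{forlater}) upgrades these to the strong bound \eqref{desired} for $2<\widetilde q<\widetilde p$. Any remaining range is then covered by complex interpolation with the $\widetilde p=\infty$ endpoint above.
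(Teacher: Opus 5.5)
Your dyadic ingredients are correct: Bessel gives the $L^2\to\ell^2L^2$ bound, and Lemma~\ref{lem2.1} gives the $L^\infty\to\ell^\infty L^2$ bound. The Bourgain splitting on the critical line does give $\#\{j:\|1_ED^zf_j\|_{L^2(d\mu)}>t\}\lesssim t^{-\widetilde q}\mu(E)$, i.e.\ a bound $L^{\widetilde q,1}(d\mu)\to\ell^{\widetilde q,\infty}(L^2(d\mu))$.

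The gap is the Marcinkiewicz upgrade. Since $\mathrm{Re}(z)$ is tied to $\widetilde q$, a fixed operator $T_z$ admits only one $\widetilde q$. For Lebesgue measure, scaling forces $\mathrm{Re}(z)=-d/\widetilde q$; in your scheme, any other $\widetilde q$ leaves an unsummable factor $2^{k\epsilon}$ with $\epsilon\neq0$. So the two restricted bounds ``surrounding'' $(\widetilde q,\widetilde p)$ concern two different operators $T_{z_0}\neq T_{z_1}$, and real interpolation does not apply to analytic families. In Theorem~\ref{forlater} it worked only because $\lambda$ stays fixed while $(p,q)$ slide along $\lambda=\alpha/q+\beta/p'$.

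Switching to Stein interpolation does not rescue the bound in the form you extended it. Let $\theta$ be the weight on the restricted endpoint. Interpolating $L^{\widetilde q_0,1}$ against $L^{\widetilde q_1}$, or even $L^{\widetilde q_1,\infty}$, produces a second Lorentz index $\widetilde r$ with $1/\widetilde r\ge\theta$. Then $\widetilde r\ge\widetilde q$ forces $\theta\le1/\widetilde q$, hence $\widetilde p=\widetilde q_0/\theta\ge\widetilde q_0\widetilde q>2\widetilde q$, and the range near $\widetilde p=\widetilde q$ is lost.

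Two refinements are missing, and they are exactly what the paper uses.

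First, extend the indicator estimate by linearity in $G=|g|^2$ rather than in $g$. That is, use the positive map $G\mapsto(\int G|D^zf_j|^2\,d\mu)_j$ into the normable space $\ell^{\widetilde q/2,\infty}$. Since $\||g|^2\|_{L^{\widetilde q/2,1}}=\|g\|^2_{L^{\widetilde q,2}}$, this gives $\|(\|gD^zf_j\|_{L^2(d\mu)})_j\|_{\ell^{\widetilde q,\infty}}\lesssim\|g\|_{L^{\widetilde q,2}(d\mu)}$. That implies Lemma~\ref{lem3.2}, which the paper obtains by duality from Theorem~\ref{3.1}.

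Second, the $\widetilde p=\infty$ endpoint must be the Lorentz-refined Lemma~\ref{lem3.5}, with $\|g\|_{L^{\widetilde q,\infty}(d\mu)}$ on the right. This requires the $L^2\to L^{q,2}(d\mu)$ bound of Theorem~\ref{forlater}. Theorem~A plus H\"older only gives $\|g\|_{L^{\widetilde q}(d\mu)}$, and interpolating that against $L^{\widetilde q_0,2}$ yields $1/\widetilde r=\theta/2+(1-\theta)/\widetilde q_1>1/\widetilde q$ whenever $\widetilde q_0>2$.

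With both refinements, take Stein interpolation with $\theta=2/\widetilde q$, $\widetilde q_1\to\infty$ and $\widetilde q_0\to2$. This gives exactly $\widetilde r=\widetilde q$ and $\widetilde p$ arbitrarily close to $\widetilde q$, with no Marcinkiewicz step needed.
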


\begin{proof} By Theorem~\ref{forlater} with $p=2$, for $s=d-\lambda=d/2-\alpha/q$,  we have
\begin{equation*}
\|D^{-s}f_j\|_{L^{q,2}(d\mu)}
\ls \|f_j\|_{L^2(\R^d)}.
\end{equation*}
Noting that $|\xi|^{z}=|\xi|^{\Re(z)}e^{i\theta}$, the unimodular factor can be passed to $\widehat{f}_j(\xi)$ without changing the $L^2$-norm, so this estimate continues to hold with $D^{-s}$ replaced by $D^{z}$ as long as $\Re(z)=-(d/2-\alpha/q)$.
Then, by an application of the Lorentz--H\"older inequality, we obtain the following lemma, which is slightly better than what we need when $\widetilde{q}$ is very large.
\begin{lemma}\label{lem3.5} Let $2<\widetilde{q}<\infty$ and let $\mathrm{Re}(z)=-(d/2-\alpha/q)$. Then
\begin{align*}
         \sup_{j}\|gD^{z}f_j\|_{L^2(d\mu)} \lesssim \|g\|_{L^{\widetilde{q}, \infty}(d\mu)}
    \end{align*}
    whenever $(f_j)_j$ is orthonormal in $L^2(\R^d)$.
\end{lemma}

On the other hand, as $\ell^{p/2}$ is embedded in $\ell^{q/2,1}$,  the following Lorentz-weakened version of what we need is a consequence of Theorem~\ref{3.1} with $s_2=0$.

\begin{lemma}\label{lem3.2} Let $2<\widetilde{q}<\widetilde{p}$ and let $\mathrm{Re}(z)=-(d/2-\alpha/q)$. Then
\begin{align*}
        \bigg(\sum_{j}\|gD^{z}f_j\|_{L^2(d\mu)}^{\widetilde{p}}\bigg)^{1/\widetilde{p}}\lesssim\|g\|_{L^{\widetilde{q},2}(d\mu)}
    \end{align*}
    whenever $(f_j)_j$ is orthonormal in $L^2(\R^d)$.
\end{lemma}

 In order to see that we can obtain the desired estimate \eqref{desired} by interpolation between  Lemmas~\ref{lem3.1} and \ref{lem3.2}, we first note that the $\ell^{\widetilde{p}}L^{2}(d\mu)$-spaces interpolate with $\widetilde{p}$ following the usual convexity rules (see for example \cite[Theorem 5.1.2]{BL}). On the other hand, we also need to check that the resulting Lorentz exponent $\widetilde{r}$ is as large as the resulting Lebesgue exponent $\widetilde{q}$. Note that the Lorentz exponents follow the same convexity rules as the Lebesgue exponents when interpolating using the complex method; see for example ~\cite{HS}.

We need to prove \eqref{desired} with~$\widetilde{p}$ arbitrarily close to $\widetilde{q}$, so we interpolate with a Lebesgue exponent $\widetilde{q}_1$ from Lemma~\ref{lem3.1} that is increasingly large, and additionally require that~$\widetilde{r}=\widetilde{q}$. In that case, we have 
$$
\frac{1}{\widetilde{r}}=\frac{\theta}{2}= \frac{\theta}{\widetilde{q}_0}+\frac{1-\theta}{\widetilde{q}_1}=\frac{1}{\widetilde{q}},
$$
where $\theta$ is the interpolation parameter and $\widetilde{q}_0$ is the Lebesgue exponent from Lemma~\ref{lem3.2}. We see that $\theta$ must equal $2/\widetilde{q}$ and so, making this substitution, we find that $\widetilde{q}_0$ must satisfy the relation
$$
\frac{2}{\widetilde{q}_0}+\frac{\widetilde{q}-2}{\widetilde{q}_1}=1.
$$
This simply forces our choice of $\widetilde{q}_0$ closer to~2, as $\widetilde{q}_1$ gets larger, as is permitted.

It remains to show that $T_z: g\to (gD^zf_j)_j$ forms a family of analytic linear operators on any closed strip contained in 
$$\mathcal{S}:=\Big\{\,z\in\mathbb{C}\,:\, -\frac{d}{2}<\mathrm{Re}(z)<- \frac{d-\alpha}{2}\,\Big\};$$
see for example \cite[Section 5.4]{SW}.
By this we mean that, for any choice of simple $\gamma$, $g$ and~$h$, the function
\begin{align*}
    F(z):= \sum_j \gamma_j\int_{\R^d}  g(x)D^zf_j(x) h(x)\,d\mu(x)
    \end{align*}
is analytic on $\mathcal{S}$ and uniformly bounded on any closed strip within it. 

For the analyticity, we consider $z\in \mathcal{S}$ and  $\varepsilon>0$ such that $B(z,2\varepsilon)\subset \mathcal{S}$. 
Note that by an application of Plancherel's identity, we can also write
$$
F(z)= \sum_j \gamma_j\int_{\R^d}  |\xi|^z\widehat{gh\mu}(\xi)\widehat{f}_j(-\xi)\,d\xi.
$$
Since~$|\xi|^{z}=e^{z\ln |\xi|}$ is analytic as a map $\mathcal{S}\to\mathbb{C}$, by an application of the complex mean value inequality, we find that
\begin{align*}
   \sup_{w\in B(z,\varepsilon/2)} \left|\frac{|\xi|^{w}-|\xi|^{z}}{w-z}\right|&\ls \big|\ln|\xi|\big|\sup_{w\in B(z,\varepsilon/2)}|\xi|^{\Re(w)}\\
   &\ls |\xi|^{\Re(z)}\big(|\xi|^\varepsilon+|\xi|^{-\varepsilon}\big)=:b(\xi).
    \end{align*}
The second term in the sum is so that the bound also holds when $0<|\xi|<1$. By the dominated convergence theorem, it will suffice prove that
\begin{align*}
     \sum_j|\gamma_j|\int  b(\xi)|\widehat{gh\mu}(\xi)\widehat{f}_j(-\xi)|\,d\xi<\infty.
\end{align*}
By an application of the Cauchy--Schwarz inequality, we can then apply Theorem~\ref{forlater} with $q=2$ (and the roles of $\mu$ and $\nu$ interchanged) to find  $p_1,p_2\in (1,2)$, such that 
\begin{align*}
  \sum_j|\gamma_j|\int  b(\xi)|\widehat{gh\mu}(\xi)\widehat{f}_j(-\xi)|\,d\xi
 & \ls  \sum_j|\gamma_j|\,\|f_j\|_{L^2(\R^d)}\|b\widehat{gh\mu}\|_{L^2(\R^d)}\\
 &\ls \|\gamma\|_{\ell^1}\Big(\|gh\|_{L^{p_1}(d\mu)}+\|gh\|_{L^{p_2}(d\mu)}\Big)<\infty,
\end{align*}
where $\Re(z)+\varepsilon=-(d/2-\alpha/p_1')$ and $\Re(z)-\varepsilon=-(d/2-\alpha/p_2')$. Thus the analyticity of $F(z)$ follows from that of $|\xi|^z$ by the dominated convergence theorem.

For the uniform boundedness, by the Cauchy--Schwarz inequality and Theorem~\ref{forlater} as before, we find that \begin{align*}
|F(z)|& \le      \|\gamma\|_{\ell^1} \big\|(D^{\overline{z}})^*[gh]\big\|_{L^2(\R^d)} \ls \|\gamma\|_{\ell^1}\|gh\|_{L^p(d\mu)},
\end{align*}
where $\Re(z)=-(d/2-\alpha/p')$. Note that Theorem~\ref{forlater} also applies to complex $(D^{\overline{z}})^*$ as the convolution with $|\cdot|^{-(d+z)}$ is at its largest when the kernel and function are both real and nonnegative. Thus $p$ does not change as $z$ moves up and down the strip. As long as $z$ stays away from the edges of $\mathcal{S}$ (where Theorem~\ref{forlater} fails to provide a bound), we can take a supremum over the closed strip to get a uniform bound. This completes the proof.
\end{proof}

\section{Applications to eigenvalue bounds}

Taking  $\mu$ to be $d$-dimensional Lebesgue measure and $g=\sqrt{V}$ in Theorem~\ref{3.2}, we recover nonendpoint versions of  the kinetic Lieb--Thirring inequality \cite{LT} and the CLR inequality \cites{C,L, R}.
However, we have gained some flexibility, and we can make other choices depending on the nature of our potential. Taking $g=1$ and $d\mu(x)=V(x)dx$, we obtain the following Morrey space variation.

\begin{theorem}\label{5.1} Let $0<2/q=\frac{d-2s_1}{\alpha+2s_2}<1$. Then there is a constant $C>0$ such that
$$
\sum_{j=1}^N \int|D^{-s_1}\psi_j(x)|^2V(x)\,dx \le  C\|V\|_{\alpha}^{2/q}\|V\|^{1-2/q}_{L^{1}(\R^d)}\bigg(\sum_{j=1}^N \|D^{s_2}\psi_j\|^2_{L^2(\R^d)}\bigg)^{2/q}
$$
whenever $(\psi_j)_{j=1,\ldots,N}$ is orthonormal in $L^2(\R^d)$, $N\ge 1$ and $V:\R^d\to \R$.
\end{theorem}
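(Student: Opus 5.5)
The plan is to apply Theorem~\ref{3.2} with $g\equiv 1$ and the measure $d\mu(x)=|V(x)|\,dx$. Since the left-hand side involves $V$ rather than $|V|$, we first note that
$$\sum_j\int|D^{-s_1}\psi_j|^2V\,dx\le \sum_j\int|D^{-s_1}\psi_j|^2|V|\,dx.$$
It is therefore enough to treat nonnegative $V$, and we assume $V\ge0$ from now on. We may also assume $\|V\|_\alpha$ and $\|V\|_{L^1}$ are finite, since otherwise there is nothing to prove.

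\textbf{Step 1: the Frostman constant.} We identify $[\mu]_\alpha$ with the Morrey norm. For every ball,
$$\mu(B(x,r))=\int_{B(x,r)}V\,dy\le \|V\|_\alpha\,|B(x,r)|^{\alpha/d}=\|V\|_\alpha\,|B(0,1)|^{\alpha/d}\,r^\alpha .$$
Hence $[\mu]_\alpha\ls\|V\|_\alpha$, and Theorem~\ref{3.2} applies with this $\mu$.

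\textbf{Step 2: the Lorentz norm of $g\equiv1$.} With $2/q+2/\widetilde q=1$, we have
$$\|1\|_{L^{\widetilde q,2}(d\mu)}\simeq \mu(\R^d)^{1/\widetilde q}=\|V\|_{L^1}^{1/\widetilde q},$$
because indicator functions of finite-measure sets have Lorentz norms comparable to the measure to the power $1/\widetilde q$, for any second index. Squaring gives $\|V\|_{L^1}^{2/\widetilde q}=\|V\|_{L^1}^{1-2/q}$.

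\textbf{Step 3: conclusion.} The left-hand side of Theorem~\ref{3.2} with $g\equiv1$ is exactly
$$\sum_j\int|D^{-s_1}\psi_j|^2V\,dx.$$
Substituting Steps 1 and 2 into the right-hand side gives the claimed bound.

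There is no real obstacle. The only points needing care are the sign of $V$, which is handled by passing to $|V|$, and checking that the Lorentz norm of a constant on a finite measure space is finite and of the stated size. Both are routine.
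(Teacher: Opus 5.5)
Your proposal is correct and is essentially the paper's own argument: the paper obtains this theorem by applying Theorem~\ref{3.2} with $g=1$ and $d\mu(x)=V(x)\,dx$. It uses that $[\mu]_\alpha\lesssim\|V\|_\alpha$ and that $\|1\|_{L^{\widetilde{q},2}(d\mu)}^2\simeq\mu(\R^d)^{2/\widetilde{q}}=\|V\|_{L^1(\R^d)}^{1-2/q}$. Your passage from $V$ to $|V|$ spells out a sign reduction that the paper leaves implicit.
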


Taking $s_1=0$ and $s_2=s$, we obtain a bound for the potential energy in terms of the fractional kinetic energy. 
\begin{cor}\label{cor5.2} Let $0< d-2s<\alpha\le d$. Then there is a constant $C>0$ such that
$$
\sum_{j=1}^N \int|\psi_j(x)|^2V(x)\,dx \le C\|V\|_{\alpha}^{\frac{d}{\alpha+2s}}\|V\|^{1-\frac{d}{\alpha+2s}}_{L^{1}(\R^d)}\bigg(\sum_{j=1}^N \|D^s \psi_j\|^2_{L^2(\R^d)}\bigg)^{\frac{d}{\alpha+2s}}
$$
whenever $(\psi_j)_{j=1,\ldots,N}$ is orthonormal in $L^2(\R^d)$, $N\ge 1$ and $V:\R^d\to \R$.
\end{cor}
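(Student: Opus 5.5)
Corollary~\ref{cor5.2} is the special case $s_1=0$, $s_2=s$ of Theorem~\ref{5.1}, so the plan is to verify the hypotheses and identify the exponent. With $s_1=0$ and $s_2=s$ the exponent relation in Theorem~\ref{5.1} reads
\[
\frac{2}{q}=\frac{d}{\alpha+2s}.
\]
The requirement $0<2/q<1$ is exactly $0<d<\alpha+2s$, that is, $d-2s<\alpha$. This is part of the hypothesis $0<d-2s<\alpha\le d$. The remaining condition $d-2s>0$ just means $s<d/2$, which keeps $D^{s}$ in the range treated throughout (and is used in the Bessel step of Theorem~\ref{3.1} through $s_1<d/2$ trivially). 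Substituting $2/q=d/(\alpha+2s)$ and $D^{-s_1}=\mathrm{Id}$ into Theorem~\ref{5.1} gives the stated inequality verbatim.

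If one also wants to recall why Theorem~\ref{5.1} holds in this setting, I would argue as follows. Assume first $V\ge0$ (for signed $V$, replace $V$ by $|V|$, which only enlarges the left-hand side's bound and leaves both norms unchanged). Set $d\mu=V\,dx$. The Frostman constant satisfies $[\mu]_\alpha\simeq\|V\|_\alpha$, since $|B(x,r)|^{\alpha/d}\simeq r^\alpha$. Now apply Theorem~\ref{3.2} with $g=1$. Because $2/q+2/\widetilde q=1$, we have
\[
\|1\|^2_{L^{\widetilde q,2}(d\mu)}\simeq \mu(\R^d)^{2/\widetilde q}=\|V\|_{L^1}^{1-2/q}.
\]
This yields Theorem~\ref{5.1}, and hence the corollary.

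No step is a genuine obstacle. The only points needing care are the sign convention for $V$ and the Lorentz norm of a constant function, which is a finite-measure computation. Both are routine.
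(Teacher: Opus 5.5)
Your proposal is correct and follows the paper's route exactly. Corollary~\ref{cor5.2} is Theorem~\ref{5.1} with $s_1=0$ and $s_2=s$, and Theorem~\ref{5.1} is Theorem~\ref{3.2} with $g=1$ and $d\mu=|V|\,dx$; your checks of $[\mu]_\alpha\simeq\|V\|_\alpha$, $\|1\|^2_{L^{\widetilde q,2}(d\mu)}\simeq\|V\|_{L^1(\R^d)}^{1-2/q}$ and $0<d/(\alpha+2s)<1$ are precisely the details the paper leaves implicit.
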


Letting $(E_j)_{j=1,\dots,N}$ denote negative eigenvalues of $(-\Delta)^{s}+V$ with $L^2(\R^d)$-normalised eigenfunctions $(\psi_j)_{j=1,\ldots,N}$, we have
$$
\sum_{j=1}^N E_j =\sum_{j=1}^N\Big\langle \psi_j,\big((-\Delta)^{s}+V\big)\psi_j\Big\rangle.
$$
Discarding the positive potential energy, this yields\begin{align*}
\sum_{j=1}^N E_j&\ge \sum_{j=1}^N \|D^s \psi_j\|^2_{L^2(\R^d)}-\sum_{j=1}^N \int|\psi_j(x)|^2V_{-}(x)\,dx,
\end{align*}
where $-V_{-}$ denotes the negative part of the potential.
As the eigenfunctions are orthogonal, we can apply Corollary~\ref{cor5.2} to find
\begin{align*}
\sum_{j=1}^N E_j
&\ge X-C\|V_{-}\|_{\alpha}^{\frac{d}{\alpha+2s}}\|V_{-}\|^{1-\frac{d}{\alpha+2s}}_{L^{1}(\R^d)}X^{\frac{d}{\alpha+2s}},\qquad X=\sum_{j=1}^N \|D^s \psi_j\|^2_{L^2(\R^d)}.
\end{align*}
 Minimising this lower bound, as a function of $X$ over all $X\ge0$, we obtain
$$
\sum_{j=1}^N E_j\gs -\|V_{-}\|_{\alpha}^{\frac{d}{\alpha+2s-d}}\|V_{-}\|_{L^{1}(\R^d)}.
$$
Recalling that the eigenvalues are negative, we conclude that
$$
\sum_{j=1}^N |E_j|\ls  \|V_{-}\|_{\alpha}^{\frac{d}{\alpha+2s-d}}\|V_{-}\|_{L^{1}(\R^d)}.
$$

By the same argument, starting with Theorem~\ref{3.2} with $g=1$, we obtain the following bound for singular potentials. When $s\in \N$, this is a particular case of a more general result of Rozenblum~\cite{R1}.
  \begin{theoremB} Let $0< d-2s<\alpha\le d$. Then there is a constant $C>0$ such that
 $$
\sum_{j=1}^N |E_j|\le  C[\mu]_{\alpha}^{\frac{d}{\alpha+2s-d}}\|\mu\|
$$
whenever $(E_j)_{j=1,\dots,N}$ are  negative eigenvalues of $(-\Delta)^{s}-\mu$.
\end{theoremB}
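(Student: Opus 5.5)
We follow the argument just given for Corollary~\ref{cor5.2}, replacing the Morrey-space estimate by Theorem~\ref{3.2} with $g=1$, $s_1=0$ and $s_2=s$. Let $(\psi_j)_{j=1,\ldots,N}$ be $L^2(\R^d)$-normalised eigenfunctions for the negative eigenvalues $(E_j)_{j=1,\dots,N}$ of $(-\Delta)^s-\mu$. Eigenfunctions for distinct eigenvalues of a self-adjoint operator are orthogonal, and within an eigenspace we may choose them orthonormal, so $(\psi_j)_j$ is orthonormal. The condition $0<d-2s<\alpha$ is exactly $0<2/q=\frac{d}{\alpha+2s}<1$. Since $\|1\|^2_{L^{\widetilde q,2}(d\mu)}\ls \mu(\R^d)^{2/\widetilde q}=\|\mu\|^{1-2/q}$, Theorem~\ref{3.2} gives
\begin{equation*}
\sum_{j=1}^N\int|\psi_j|^2\,d\mu\le C[\mu]_\alpha^{\frac{d}{\alpha+2s}}\|\mu\|^{1-\frac{d}{\alpha+2s}}X^{\frac{d}{\alpha+2s}},\qquad X=\sum_{j=1}^N\|D^s\psi_j\|^2_{L^2(\R^d)}.
\end{equation*}

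Next we write $\sum_j E_j=\sum_j\langle\psi_j,((-\Delta)^s-\mu)\psi_j\rangle=X-\sum_j\int|\psi_j|^2d\mu$. To make sense of this, we interpret the operator through its quadratic form $\|D^s\psi\|_2^2-\int|\psi|^2d\mu$. This form is well defined and form-bounded on $H^s$, which follows from the $N=1$ case of the previous display together with Young's inequality. Combining the two displays,
\begin{equation*}
\sum_j E_j\ge X-C[\mu]_\alpha^{\theta}\|\mu\|^{1-\theta}X^{\theta},\qquad \theta=\frac{d}{\alpha+2s}<1.
\end{equation*}

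We then minimise $h(X)=X-AX^\theta$ over $X\ge0$, where $A=C[\mu]_\alpha^{\theta}\|\mu\|^{1-\theta}$. The minimum is $-c_\theta A^{1/(1-\theta)}$. Since $\frac{1}{1-\theta}=\frac{\alpha+2s}{\alpha+2s-d}$, we get $A^{1/(1-\theta)}\simeq[\mu]_\alpha^{\frac{d}{\alpha+2s-d}}\|\mu\|$. The exponents therefore match the statement, and because the $E_j$ are negative, this gives $\sum_j|E_j|\ls[\mu]_\alpha^{\frac{d}{\alpha+2s-d}}\|\mu\|$.

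We expect the main obstacle to be justifying the setup rather than the inequality itself. One must ensure that $\int|\psi_j|^2d\mu$ is meaningful for eigenfunctions, i.e. that the trace of $H^s$ functions on $\mathrm{supp}\,\mu$ exists. We would handle this by density, using Theorem~A (or Theorem~\ref{3.2}) for smooth functions and then extending. One must also ensure that $\|\mu\|<\infty$ is finite; otherwise the bound is vacuous.
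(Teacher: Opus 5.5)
Your proposal is correct and follows the paper's argument exactly: Theorem~\ref{3.2} with $g=1$, $s_1=0$, $s_2=s$, the identity $\|1\|^2_{L^{\widetilde q,2}(d\mu)}\simeq\|\mu\|^{1-2/q}$, and then the same minimisation of $X-AX^{\theta}$ used for the Morrey-space bound, where $\theta/(1-\theta)=d/(\alpha+2s-d)$ gives the stated exponent. Your extra remarks on orthonormality within eigenspaces, form-boundedness and the trace of $H^s$ functions on $\mathrm{supp}\,\mu$ are technicalities the paper leaves implicit, and you handle them correctly; the one quibble is that $0<d-2s<\alpha$ implies $0<d/(\alpha+2s)<1$ but is not equivalent to it, and the implication is all you need.
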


On the other hand, taking $s_1=s$ and $s_2=0$ in Theorem~\ref{5.1} yields the following estimate more suitable for counting the  negative eigenvalues. With $\alpha=d-1$, this is Corollary~\ref{cor0.1} of the introduction.

\begin{cor}\label{cor5.1} Let $0< d-2s<\alpha\le d$. Then there is a constant $C>0$ such that
$$
\sum_{j=1}^N \int|D^{-s}\psi_j(x)|^2V(x)\,dx \le  C\|V\|_{\alpha}^{\frac{d-2s}{\alpha}}\|V\|^{1-\frac{d-2s}{\alpha}}_{L^{1}(\R^d)}N^{\frac{d-2s}{\alpha}}
$$
whenever $(\psi_j)_{j=1,\ldots,N}$ is orthonormal in $L^2(\R^d)$, $N\ge 1$ and $V:\R^d\to \R$.
\end{cor}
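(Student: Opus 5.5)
This is a specialisation of Theorem~\ref{5.1}, and I would derive it directly from that result. First I set $s_1=s$ and $s_2=0$ in Theorem~\ref{5.1}. The exponent condition there reads
$$0<2/q=\frac{d-2s}{\alpha}<1,$$
which is exactly the hypothesis $0<d-2s<\alpha\le d$. The condition $\alpha\le d$ is needed so that the Morrey norm $\|V\|_\alpha$ corresponds to the Frostman condition for $d\mu=V\,dx$. With this choice the right-hand side of Theorem~\ref{5.1} becomes
$$C\|V\|_{\alpha}^{\frac{d-2s}{\alpha}}\|V\|^{1-\frac{d-2s}{\alpha}}_{L^{1}(\R^d)}\bigg(\sum_{j=1}^N \|\psi_j\|^2_{L^2(\R^d)}\bigg)^{\frac{d-2s}{\alpha}}.$$

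Next I use orthonormality: each $\|\psi_j\|_{L^2(\R^d)}=1$, so the sum is exactly $N$. This gives the claimed bound with $N^{\frac{d-2s}{\alpha}}$.

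For completeness I would also check how Theorem~\ref{5.1} itself follows from Theorem~\ref{3.2}, since that is where the Morrey and $L^1$ norms enter. I would apply Theorem~\ref{3.2} with $g=1$ and the measure $d\mu=|V|\,dx$; it suffices to treat $|V|$, because $V\le |V|$ pointwise and the integrand is nonnegative. There are then two points to verify.
\begin{itemize}
\item The Frostman constant is controlled by the Morrey norm, $[\mu]_\alpha\lesssim\|V\|_\alpha$, because $|B(x,r)|^{\alpha/d}\sim r^\alpha$.
\item The Lorentz norm of the constant function satisfies $\|1\|_{L^{\widetilde q,2}(d\mu)}^2\sim \mu(\R^d)^{2/\widetilde q}=\|V\|_{L^1}^{1-2/q}$, because the Lorentz norm of an indicator function is a power of the measure of the set, and $2/q+2/\widetilde q=1$.
\end{itemize}

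Multiplying these two factors reproduces the constant $\|V\|_\alpha^{2/q}\|V\|_{L^1}^{1-2/q}$, and substituting $s_1=s$, $s_2=0$ then gives the corollary.

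I do not expect a real obstacle; the argument is bookkeeping of exponents. The only care needed is with sign-changing $V$, handled by the domination $V\le|V|$, and with confirming that $\widetilde q=2q/(q-2)$ is finite and greater than $2$, which holds because $q>2$.
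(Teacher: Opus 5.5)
Your proposal is correct and follows the paper's route exactly. The corollary comes from setting $s_1=s$, $s_2=0$ in Theorem~\ref{5.1} and using orthonormality, $\sum_j\|\psi_j\|_{L^2}^2=N$, while Theorem~\ref{5.1} itself comes from Theorem~\ref{3.2} with $g=1$ and $d\mu=V\,dx$. Your passage to $|V|$ for sign-changing potentials is a small piece of care that the paper leaves implicit.
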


To bound the number of negative eigenvalues, we use an argument due to Frank~\cite{F3}; see also \cite[page 11]{N0}. Let $W$ denote the span of the eigenfunctions $(\psi_j)_{j=1,\ldots,N}$. As $D^s$ is a strictly positive operator on $L^2(\R^d)$, we have $$\dim D^sW =N,$$ so we can find orthonormal $(D^sg_j)_{j=1,\ldots,N}$ with $g_j\in W$.
Writing the $g_j$ as linear combinations of the eigenfunctions, and calculating as before, we find
\begin{align*}
0&\ge \sum_{j=1}^N \|D^s g_j\|^2_{L^2(\R^d)}-\sum_{j=1}^N \int|g_j(x)|^2V_{-}(x)\,dx,
\end{align*}
so that
$$
N\le \sum_{j=1}^N \int  |D^{-s}f_j(x)|^2 V_{-}(x)\,dx,
$$
where the $f_j=D^sg_j$ are orthonormal. By an application of Corollary~\ref{cor5.1},  this yields
 $$
 N\ls \|V_{-}\|_{\alpha}^{\frac{d-2s}{\alpha}}\|V_{-}\|^{1-\frac{d-2s}{\alpha}}_{L^{1}(\R^d)}N^{\frac{d-2s}{\alpha}},
 $$
 and so, by rearranging, we conclude
 $$
 N\ls \|V_{-}\|_{\alpha}^{\frac{d-2s}{\alpha+2s-d}}\|V_{-}\|_{L^{1}(\R^d)}.
 $$ 
 
By the same argument, starting with Theorem~\ref{3.2} with $g=1$, we obtain the following bound for singular potentials which is a particular case of a more general result of Rozenblum and Tashchiyan~\cite{RT}.
 
 \begin{theoremC} Let $0< d-2s<\alpha\le d$. Then there is a constant $C>0$ such that
 $$
N\le  C[\mu]_{\alpha}^{\frac{d-2s}{\alpha+2s-d}}\|\mu\|
$$
whenever $(E_j)_{j=1,\dots,N}$ are  negative eigenvalues of $(-\Delta)^{s}-\mu$.
\end{theoremC}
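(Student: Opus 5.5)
We follow the argument just given for Theorem B and for the Morrey-space eigenvalue count, now with the measure $\mu$ as the potential.

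\textbf{Step 1: the input inequality.} Apply Theorem~\ref{3.2} with $g=1$, $s_1=s$ and $s_2=0$. Then $2/q=(d-2s)/\alpha\in(0,1)$, which holds exactly because $0<d-2s<\alpha$. We also have $\widetilde q$ with $1/\widetilde q=\frac12(1-\frac{d-2s}{\alpha})$. The indicator of the support of $\mu$ satisfies $\|1\|^2_{L^{\widetilde q,2}(d\mu)}\ls\|\mu\|^{2/\widetilde q}=\|\mu\|^{1-2/q}$. Since $\sum_j\|D^0f_j\|^2_{L^2}=N$, this gives
$$
\sum_{j=1}^N\int|D^{-s}f_j|^2\,d\mu\ls[\mu]_\alpha^{\frac{d-2s}{\alpha}}\|\mu\|^{1-\frac{d-2s}{\alpha}}N^{\frac{d-2s}{\alpha}}
$$
for orthonormal $(f_j)_{j=1,\ldots,N}$.

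\textbf{Step 2: Frank's argument.} Let $W$ be the span of the eigenfunctions of $(-\Delta)^s-\mu$ for the negative eigenvalues. The operator is understood via its quadratic form, with $\mu$ form-bounded by Step 1 or Theorem~A. Because $D^s$ is injective, $\dim D^sW=N$. Choose $g_j\in W$ with $f_j=D^sg_j$ orthonormal. Write each $g_j$ in the eigenbasis; since the form is negative on $W$, summing gives
$$
N=\sum_j\|D^sg_j\|^2\le\sum_j\int|g_j|^2\,d\mu=\sum_j\int|D^{-s}f_j|^2\,d\mu.
$$
The restriction $|g_j|^2$ to $\mu$ makes sense by the trace estimate.

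\textbf{Step 3: rearrange.} Combining Steps 1 and 2 gives $N^{1-\frac{d-2s}{\alpha}}\ls[\mu]_\alpha^{\frac{d-2s}{\alpha}}\|\mu\|^{1-\frac{d-2s}{\alpha}}$. Raising both sides to the power $\alpha/(\alpha+2s-d)$ yields
$$
N\ls[\mu]_\alpha^{\frac{d-2s}{\alpha+2s-d}}\|\mu\|.
$$

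\textbf{Main obstacle.} The only delicate point is justifying the variational identity with a measure potential. Specifically, $\langle g,((-\Delta)^s-\mu)g\rangle=\|D^sg\|^2-\int|g|^2d\mu$ must hold on the form domain, and eigenfunctions must lie in $H^s$ with traces in $L^2(d\mu)$. I would handle this through the closed quadratic form, using the trace bound from Step 1 to get relative form boundedness. Beyond that, the argument is identical to the Morrey-space case, with $V_-\,dx$ replaced by $\mu$.
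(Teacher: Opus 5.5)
Your proposal is correct and follows the paper's proof. The paper also applies Theorem~\ref{3.2} with $g=1$, $s_1=s$, $s_2=0$ (where $\|1\|^2_{L^{\widetilde q,2}(d\mu)}\ls\|\mu\|^{1-2/q}$), uses Frank's orthonormalisation of $D^sW$ to obtain $N\le\sum_j\int|D^{-s}f_j|^2\,d\mu$, and then rearranges; the paper does not discuss the form-domain technicalities you flag.
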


\end{document}